\newcommand{\ep}{\varepsilon}
\newcommand{\A}{\mathbb A}
\newcommand{\B}{\mathbb B}
\newcommand{\C}{\mathbb C}
\newcommand{\D}{\mathbb D}
\newcommand{\K}{\mathbb K}
\newcommand{\Q}{\mathbb Q}
\newcommand{\R}{\mathbb R}
\newcommand{\Z}{\mathbb Z}
\newcommand{\sB}{\mathcal B}
\newcommand{\sD}{\mathcal D}
\newcommand{\sL}{\mathcal L}
\newcommand{\sM}{\mathcal M}
\newcommand{\sP}{\mathcal P}
\newcommand{\sU}{\mathcal U}
\newcommand{\sdiv}{{}_{\mathrm{div}}}
\newcommand{\smod}{{}_{\mathrm{mod}}}
\newcommand{\umod}{{}^{\mathrm{mod}}}
\newcommand{\umm}{{}^{\mathrm{mm}}}
\newcommand{\1}{^{-1}}
\newcommand{\pt}{\mathrm{pt.}}
\DeclareMathOperator{\bDiv}{\mathcal{D}iv}
\DeclareMathOperator{\cent}{center}
\DeclareMathOperator{\Div}{Div}
\DeclareMathOperator{\GLC}{GLC}
\DeclareMathOperator{\mult}{mult}
\DeclareMathOperator{\Supp}{Supp}
\newcommand{\eqdef}{\mathrel{\,\stackrel{\mathrm{def}}{=}\,}}
\theoremstyle{plain}
 \newtheorem{cor}{Corollary}
 \newtheorem*{gener_sdiv}{General properties of divisorial part of adjunction}
 \newtheorem{prop}{Proposition}
 \newtheorem{propdf}{Proposition-Definition}
 \newtheorem{thm}{Theorem}
\theoremstyle{definition}
 \newtheorem{conj}{Conjecture}
 \newtheorem{df}{Definition}
\theoremstyle{remark}
 \newtheorem{ex}{Example}
\title{Log adjunction: moduli part}
\author{V.V.~Shokurov
\thanks{Partially supported by
NSF (grant DMS-0701465).
2010 {\em Mathematical Subject Classification} 14E30.}}
\begin{document}

\maketitle

\begin{abstract}
Upper moduli part of adjunction is introduced and
its basic property are discussed.
The moduli part satisfies the BP
in the case of rational multiplicities and
is nef in the maximal case.
\end{abstract}

Usually we assume that the base field $k$ is algebraicaly closed of
characteristic $0$, e.g., $k=\C$.

We start from the divisorial part.

\begin{df} \label{df-sdiv}
There are two equivalent approaches to define the {\em divisorial part of adjunction\/}
or {\em discriminant\/}: due to Kawamata \cite{Kaw} and due to Ambro \cite{Am}.
The definition of the divisorial part of adjunction $D\sdiv$ on $Z$ supposes,
that $X/Z$ is surjective (in codimension $1$ over $Z$), and
the following {\em generic lc\/} property
\begin{description}
  \item[$\GLC$:]
the pair $(X/Z,D)$ is lc over the generic points of the base $Z$.

\end{description}
The definition of its b-divisorial version $\D\sdiv$ of $Z$ supposes
additionally that $X/Z$ is proper and
$(X,D)$ is a log pair \cite[7.1 and Remark~7.7]{PSh}.
Notice also that
$(\D\sdiv)_Z=D\sdiv$ and
$\D\sdiv=\sum d_W W$, where $d_W$ are defined by \cite[Construction~7.2 and Remark~7.7]{PSh}.
\end{df}

Recall the definition of the {\em divisorial pull-back\/}
$f^\circ$ \cite[2.4]{PSh}.
Let $f\colon X\to Z$  be a surjective morphism of
normal irreducible varieties (or algebraic spaces).
Then the homomorphism of Weil $\R$-divisors
$$
f^\circ\colon \Div_\R Z\to \Div_\R X
$$
is uniquely linearly extended from
the prime Weil divisors.
For a prime Weil divisor $D$ on $Z$, put
$f^\circ D\eqdef f^*D$
over the generic point of $D$
and $0$ outside.
Note that $D$ is Cartier near
its generic point.

The homomorphism $f^\circ$ has a natural and unique
extension on the b-$\R$-divisors:
$$
f^\circ\colon \bDiv_\R Z\to \bDiv_\R X,
$$
such that for every morphism $f'\colon X'\to Z'$,
birationally equivalent to $f$ over $Z$, and
for every b-$\R$-divisor $\sD\in \bDiv_\R Z$,
$f'^\circ(\sD_{Z'})=(f^\circ \sD)_{X'}$
holds in codimension $1$ over $Z'$.
In other words, the difference
$f'^\circ(\sD_{Z'})-(f^\circ \sD)_{X'}$
is truly exceptional over $Z'$ \cite[Definition~3.2]{Sh00}.
In general
$f'^\circ(\sD_{Z'})=(f^\circ \sD)_{X'}$
does not hold everywhere even for a Cartier b-divisor $\sD$,
e.g., on prime truly exceptional divisors
with respect to $f'$.

On the $\R$-Cartier b-$\R$-divisors the pull-back $f^\circ$
is exactly well-known $f^*$ \cite[7.2]{Sh00}:
$$
f^\circ\sD=f^*\sD.
$$
Indeed, by definition, for every birational
base change $Z'$,
$f'^\circ \sD=f'^*\sD$
in codimension $1$ over $Z'$.
Additionally, b-$\R$-divisors $f'^\circ\sD$ and $f'^*\sD$ are vertical,
and, for every vertical prime
b-divisor $D$ of $X$, there exists a morphism $f'\colon X'\to Z'$,
birationally equivalent to $f$ over $Z$ such that
$D$ and $f'D$ are prime divisors on $X'$ and $Z'$ respectively.

Recall also the following properties.

\begin{gener_sdiv}[cf.~{\cite[Lemma~7.4]{PSh}}] \label{gener_prop_div_sdiv}
Let $(X/Z,D)$ be a log pair as in Definition~\ref{df-sdiv} and
$W$ be a prime divisor on $Z$.
The $\R$-divisor $D\sdiv$ and b-$\R$-divisor $\D\sdiv$ satisfy
the following properties:
\begin{description}

\item[\rm (1)\/]
{\em birationality:\/} $d_W=\mult_W D\sdiv=\mult_W\D\sdiv$ is independent of
a crepant model of $(X,D)$ over $Z$ and $\D\sdiv$
is also independent of a model of $Z$;

\item[\rm (2)\/]
{\em semiadditivity:\/} for any $\R$-Cartier divisor
$\Delta$ on $Z$ the log pair $(X/Z,D')$ with $D'=D+f^*\Delta$
satisfies $\GLC$ and equalities
$D'\sdiv=D\sdiv+\Delta,\D'\sdiv=\D\sdiv+\overline{\Delta}$ hold;

\item[\rm (3)\/]
$(X,D)$ is lc (respectively klt) over the generic point of $W$ if and only if
$d_W\le 1$ (respectively $<1$);

\item[\rm (4)\/]
{\em effectiveness:\/} if $D$ is effective over the generic
point of $W$ then $d_W\ge 0$;
so, $D\sdiv\ge 0$ if $D\ge 0$;

\item[\rm (5)\/]
{\em rationality:\/} if $D$ is a  $\Q$-divisor then
$D\sdiv,\D\sdiv$ are respectively $\Q$-, b-$\Q$-divisors;

\item[\rm (6)\/]
{\em boundary:\/} if $D$ is an $\R$-(respectively $\Q$-)boundary then
$D\sdiv$ is an $\R$-(respectively $\Q$-)boundary;
a similar statement holds for subboundaries
(cf. (3) above).
\end{description}
\end{gener_sdiv}

Notice that in~(3) the relative klt property over the generic point
of $W$ means the klt property in prime b-divisors of $X$
with the center $W$.

\begin{proof}
Immediate by definition.
\end{proof}

\begin{cor} \label{lc_div_adj}
Let $(X/Z,D)$ be a log pair under
the adjunction assumption $\GLC$.
Suppose that BP holds for the divisorial part of adjunction
$\D\sdiv$.
Then the divisorial part of adjunction exactly preserves
lc singularities:
\begin{description}
\item[\rm (1)]
$(X,D)$ is lc if and only if
so does $(Z,D\sdiv)$;
\item[\rm (2)]
$(X,D)$ is klt over $Z$ if and only if
so does $(Z,D\sdiv)$.

\end{description}
We can omit BP.
Then lc and klt properties on $Z$ are determined
directly by the b-$\R$-divisor $\D\sdiv$.

\end{cor}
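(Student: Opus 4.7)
The plan is to translate both sides of the two equivalences into a single intrinsic condition on the coefficients $d_E=\mult_E\D\sdiv$ along prime b-divisors $E$ of $Z$. Property (3) of the \emph{General properties of divisorial part of adjunction} handles the $X$-side, and BP handles the $Z$-side.

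\emph{Step 1 (lc/klt on $X$ versus coefficients of $\D\sdiv$).} First I would show, using only $\GLC$ and properties (1) and (3), that $(X,D)$ is lc (respectively, klt over $Z$) if and only if $d_E\le 1$ (resp., $d_E<1$) for every prime b-divisor $E$ of $Z$. The forward direction is a direct application of property (3) on any model $Z'\to Z$ on which $E$ is a genuine Weil divisor, together with a crepant model $(X'/Z',D')$: property (1) identifies the coefficient along $E$ in $\D\sdiv$ with the one computed on $(X'/Z',D')$, while property (3) gives the inequality. For the converse, any prime b-divisor $F$ of $X$ is either horizontal — in which case $\GLC$ already forces the log discrepancy to be nonnegative (positive in the klt case, cf.\ the remark after property (3)) — or vertical, and then $F$ lies over a unique prime b-divisor $E$ of $Z$; realizing $E$ as a Weil divisor on a high enough model and applying property (3) again turns the bound $d_E\le 1$ (resp.\ $<1$) into the required bound on the discrepancy of $F$.

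\emph{Step 2 (coefficients of $\D\sdiv$ versus lc/klt of $(Z, D\sdiv)$).} The BP hypothesis is that the b-$\R$-divisor $\D\sdiv$ descends: there is a model $\widetilde Z\to Z$ such that $(\D\sdiv)_{\widetilde Z}$ is $\R$-Cartier and $\D\sdiv=\overline{(\D\sdiv)_{\widetilde Z}}$. On any log resolution $Z''\to\widetilde Z$ of $(Z,D\sdiv)$, the trace $(\D\sdiv)_{Z''}$ is then the genuine pullback of $(\D\sdiv)_{\widetilde Z}$, so its coefficients along prime divisors $E$ of $Z''$ coincide with the b-multiplicities $d_E$. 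The standard criterion for lc (resp.\ klt) of $(Z,D\sdiv)$ on such a log resolution now becomes exactly $d_E\le 1$ (resp.\ $d_E<1$) for every prime b-divisor $E$ of $Z$. Chaining this with Step 1 gives (1) and (2). For the final sentence, one observes that the condition in Step 1 is formulated purely in terms of the b-divisor $\D\sdiv$ and makes no reference to BP, so even without BP we can \emph{define} the lc/klt property ``on $Z$'' by the intrinsic condition $d_E\le 1$ (resp.\ $<1$) for every prime b-divisor $E$ of $Z$, and Step 1 ensures it matches lc/klt of $(X,D)$.

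\emph{Main obstacle.} The technical heart of the argument is the converse in Step 1. It requires showing that lc of $(X,D)$ can be detected entirely through vertical prime b-divisors of $X$ over $Z$, and then that each such $F$ is uniformly controlled by a single prime b-divisor $E$ of $Z$ whose b-multiplicity in $\D\sdiv$ is the correct one. This combines $\GLC$ (to dispose of horizontal $F$), property (1) (to match coefficients across birational models) and property (3) (to turn a coefficient bound into a discrepancy bound), but it forces one to pass to sufficiently high crepant models of both $X$ and $Z$; once this is set up, all remaining verifications are immediate from the General properties.
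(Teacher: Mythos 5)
Your proposal is correct and takes essentially the same route as the paper, which likewise reduces both sides of (1) and (2) to the coefficient condition $d_E\le 1$ (resp.\ $<1$) on $\D\sdiv$ via General property~(3) and then uses BP (stable over $Z$) to identify that condition with the lc/klt property of $(Z,D\sdiv)$ --- the paper simply compresses all of this into the single line that it ``follows immediately from General property~(3).'' One small slip worth fixing: BP means $\K_Z+\D\sdiv=\overline{K_{\widetilde Z}+(\D\sdiv)_{\widetilde Z}}$, i.e.\ $\D\sdiv=\B(\widetilde Z,(\D\sdiv)_{\widetilde Z})$, rather than $\D\sdiv=\overline{(\D\sdiv)_{\widetilde Z}}$; with the correct formulation the multiplicities $d_E$ are exactly the codiscrepancies of $(Z,D\sdiv)$ on a log resolution, which is what your Step~2 actually needs to read off the lc/klt criterion.
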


\begin{proof}
The last phrase of the statement means that
the pair $(Z,\D\sdiv)$ is lc, if $\mult_P\D\sdiv\le 1$
for every prime b-divisor $P$ of $Z$, that is,
$\D\sdiv$ is a b-subboundary.
Respectively, the klt property over $Z$
means: $\mult_P\D<1$ for every prime b-divisor $P$ of $X$,
vertical over $Z$, where
$\D=\B(X,D)$.
So, if $\D\sdiv$ satisfies BP, stable over $Z$, then
the lc property holds for $(Z,D_Z)$
with the trace $D_Z=(\D\sdiv)_Z$ and
the klt property holds for $(X,D)$ over $Z$.

The proof follows immediately from
General property~(3).

\end{proof}

\begin{df}
Let $(X/Z,D)$ be a log pair with proper surjective $X/Z$ and
under $\GLC$.
Then a {\em moduli part\/} $\sM^X$
is easily defined as a b-$\R$-divisor
$$
\sM\eqdef\sM^X\eqdef\sD\umod\eqdef\K+\D-f^\circ(\K_Z+\D\sdiv),
$$
where $\D$ is the codiscrepancy b-$\R$-divisor of $(X,D)$ and
$\K=\K_X,\K_Z$ are canonical b-divisors of $X$ and of $Z$ respectively.
As a canonical b-divisor $\K$ the moduli part $\sD\umod$ is defined
up to a linear equivalence on $X$.
\end{df}

So, the {\em log adjunction\/} holds:
$$
\K+\D=f^\circ(\K_Z+\D\sdiv)+\sD\umod.
$$

Recall, that  the {\em codiscrepancy\/} b-$\R$-divisor
$\D=\B(X,D)=-\A(X,D)$
in the sum $\K+\D$ corresponds to the log pair $(X,D)$
and is $\overline{K+D}-\K$, that is,
$\K+\D=\overline{K+D}$.
For a crepant pair  $(X',D')$ of $(X,D)$, $\D'=\D$.

Bold vs script denotes very special behaviour
of a b-codiscrepancy and (anti)similar for
a canonical b-divisor.
All other b-$\R$-divisors, including, b-$\R$-Cartier ones
will be denoted by calligraphic capital letters.

So, on every model of $X'$ of $X$ over $Z$
$$
M'\eqdef(\sM)_{X'}=D'\umod\eqdef(\sD'\umod)_{X'}=
K_{X'}+D'-f^\circ(\K_Z+\D\sdiv)_{X'},
$$
where $D'=\D_{X'}$  is the trace of $\D$ on $X'$ and
is a crepant divisor of $D$ on $X'$ if $(X',D')$ is
a crepant log pair of $(X,D)$.
Moreover, if a morphism $f'\colon X'\to Z'$ is
birationally equivalent to $f$ over $Z$, then
$$
M'=K_{X'}+D'-f'^\circ(K_Z+D'\sdiv)=
K_{X'}+D'-f'^*(K_{Z'}+D'\sdiv)
$$
in codimension $1$ over $Z'$.
This allows to calculate the moduli part $M'$
immediately in codimension $1$ over $Z'$.
In general, for truly exceptional divisors
with respect to $f'$,
none of pull-backs $f'^\circ,f'^*$
give an exact formula everywhere.
However, subsequent blowups of the variety $X'$ and
of the base $Z'$ allows to determine the moduli part.
That is the meaning of b-($\R$-)divisors.

\begin{prop} \label{bp_equiv_Car}
b-$\R$-Divisor $\D\sdiv$ satisfies {\em BP}, or,
equivalently, $\K_Z+\D\sdiv$ is an $\R$-Cartier b-$\R$-divisor
if and only if
the moduli part $\sM$ is an $\R$-Cartier b-$\R$-divisor.

More precisely,
let $f'\colon X'\to Z'$ be a proper morpism
birationally equivalent to $f$ over $Z$ such that
$\K_X+\D$ is stable over $X'$.
Then if $\K_Z+\D\sdiv$ is an $\R$-Cartier b-$\R$-divisor, stable over $Z'$,
then $\sM$ is an $\R$-Cartier b-$\R$-divisor,
stable over model $X'$, and
$\sM=\overline{M'}$, where
$$
M'=K_{X'}+D'-f'^*(K_{Z'}+D'\sdiv).
$$
Conversely,
if $\sM$ is an $\R$-Cartier b-$\R$-divisor, stable over
the model $X'$ and additionally
$f'$ has equidimensional fibers,
then
{\em BP} holds for $\D\sdiv$ with stability over $Z'$.
\end{prop}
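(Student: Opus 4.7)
The plan is to derive both directions from the log adjunction identity
$$\K+\D = f^\circ(\K_Z+\D\sdiv)+\sM,$$
combined with the fact, recalled in the excerpt, that $f^\circ=f^*$ on $\R$-Cartier b-$\R$-divisors. The preliminary equivalence \textbf{BP for $\D\sdiv$} $\Leftrightarrow$ \textbf{$\K_Z+\D\sdiv$ is $\R$-Cartier} is essentially the definition of BP (up to the canonical ambiguity $\K_Z$, which need not itself be $\R$-Cartier). Granting this, the easy direction of the equivalence with \textbf{$\sM$ is $\R$-Cartier} is immediate: if $\K_Z+\D\sdiv$ is $\R$-Cartier, then $f^\circ(\K_Z+\D\sdiv)=f^*(\K_Z+\D\sdiv)$ is $\R$-Cartier, hence so is $\sM=\K+\D-f^\circ(\K_Z+\D\sdiv)$. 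The reverse inference requires descent from $X$ to $Z$ and is where the equidimensionality hypothesis enters.

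For the refined forward direction, assume $\K_Z+\D\sdiv=\overline{L}$ for some $\R$-Cartier divisor $L$ on $Z'$ (the stability hypothesis). Then $f^\circ\overline{L}=\overline{f'^*L}$ is $\R$-Cartier and stable over $X'$, and since $\K_X+\D$ is stable over $X'$ by assumption, the identity forces $\sM$ to be $\R$-Cartier and stable over $X'$ with trace
$$M' = K_{X'}+D'-f'^*(K_{Z'}+D'\sdiv).$$
In codimension $1$ over $Z'$ this coincides with the generic adjunction formula $K_{X'}+D'-f'^\circ(K_{Z'}+D'\sdiv)$, and the $\R$-Cartier property plus stability over $X'$ propagates the formula everywhere on $X'$.

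For the refined converse, assume $\sM$ is $\R$-Cartier stable over $X'$ and $f'$ is equidimensional. The identity gives $f^\circ(\K_Z+\D\sdiv)=\K_X+\D-\sM$, which is $\R$-Cartier and stable over $X'$; let $N$ denote its trace on $X'$. The goal is to exhibit an $\R$-Cartier $L$ on $Z'$ with $\K_Z+\D\sdiv=\overline{L}$. Equidimensionality of $f'$ is the decisive input: it precludes truly exceptional divisors for $f'$, so every vertical prime divisor $W_i'$ on $X'$ surjects onto a prime divisor $W$ of $Z'$ with some ramification $m_i$, and
$$\mult_{W_i'}N \;=\; m_i\cdot\mult_W(\K_Z+\D\sdiv)_{Z'}.$$
This consistently recovers $L=(\K_Z+\D\sdiv)_{Z'}$ divisor by divisor, and the $\R$-Cartier property of $L$ descends from that of $N=f'^*L$ by proper equidimensional descent. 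Stability of $\K_Z+\D\sdiv$ over $Z'$ on a higher model $Z''\to Z'$ is obtained by lifting to a compatible equidimensional refinement $X''\to X'$ and transferring the stability of $\sM$.

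The principal obstacle is precisely this descent step in the converse. Without equidimensionality, truly exceptional vertical divisors on $X'$ can carry multiplicities of $\K_X+\D-\sM$ unrelated to $Z'$, and a prime divisor of $Z'$ can have its contribution ``invisible'' on $X'$, so that $L$ fails to be $\R$-Cartier even when $f'^*L$ is. Equidimensionality is what ensures the faithfulness of $f'^*$ on vertical $\R$-Cartier content, sealing the argument.
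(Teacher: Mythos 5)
Your proposal is correct and follows essentially the same route as the paper: the forward direction is the identical direct computation using $f^\circ=f^*$ on $\R$-Cartier b-$\R$-divisors together with the two stability hypotheses, and the converse rests on the same descent fact that for a proper equidimensional morphism a divisor on the base is $\R$-Cartier if and only if its pull-back is (the paper sketches this by cutting with general hyperplane sections to reduce to finite morphisms), equidimensionality serving exactly to rule out truly exceptional divisors. The only cosmetic difference is that the paper isolates the stability over higher models of $Z'$ as a separate lemma on compositions $X\to Y\to Z$, whereas you transfer it through equidimensional refinements; both amount to the same multiplicity comparison along vertical prime divisors dominating prime divisors of the base.
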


In the proposition we have two kinds of stabilities.
For an $\R$-Cartier b-$\R$-divisor $\sD$ of $X$,
the {\em stability over\/} a model $X'$ of $X$ means
that $\sD_{X'}$ is $\R$-Cartier and  $\sD=\overline{\sD_{X'}}$.
Respectively, the {\em stability} of BP, for an b-$\R$-divisor $\D$
{\em over\/} $X'$, means that $(X',\D_{X'})$ is a log pair and
$\D=\B(X',\D_{X'})$.
Notice that either of stabilities over $X'$ implies
the same over every model of $X$ over $X'$.
Equivalently, each stability holds over every
sufficiently high model of $X$ if the stability holds on
over some of models.
So, by Hironaka if a proper morphisms $f'\colon X'\to Z'$
is birationally equivalent to $f$ over $Z$ (or $Z'$)
then replacing $X'$ by a higher model of $X$ we can suppose
that $(X',D')$ is a crepant model of $(X,D)$ and
satisfies BP with stability over $X'$.

\begin{proof}
According to the last remark, required $f'$ exists for every
model $Z'$ of $Z$.
For the converse statement, a flattering due to Hironaka
allows to construct $f'$ with equidimensional fibers
for some sufficiently high models $Z'$ over $Z$.

If $\K_Z+\D\sdiv$ is $\R$-Cartier, stable over $Z'$, then
immediately by definition and our assumptions
\begin{align*}
\sM=&\K+\D-f^\circ(\K_Z+\D\sdiv)=
\overline{K+D}-f^*(\overline{K_{Z'}+D'\sdiv})=\\
&\overline{K_{X'}+D'}-\overline{f'^*(K_{Z'}+D'\sdiv)}=
\overline{K_{X'}+D'-f'^*(K_{Z'}+D'\sdiv)}.
\end{align*}
Conversely, let $\sM$ be $\R$-Cartier, stable over $X'$, and
$f'$ be with equidimensional fibers.
Then the $\R$-Cartier property of the b-$\R$-divisor $\K+\D\sdiv$
and its stability over $Z'$
follows from the following two statements.

For a proper morphism $f\colon X\to Z$
of normal varieties, with equidimensional fibers, the equality of pull-backs
$f^\circ=f^*$ holds on $\R$-Cartier divisors, wherein
$D$ on $Z$ is $\R$-Cartier if and only if
$f^\circ D$ is $\R$-Cartier on $X$.
General hyperplane sections of $X$ reduce the proof
to the case of a finite morphism $f$.

Let $f\colon X\to Z$ be a
composition of surjective morphisms
of normal varieties $g\colon X\to Y$
and $h\colon Y\to Z$,
and $D,E$ be $\R$-divisors on $Y$ and $Z$ respectively such that
$E$ is $\R$-Cartier and $g^\circ D=f^*E$
in codimension $1$ over $Y$.
Then $D=h^*E$, in particular, $D$ is also $\R$-Cartier and
$g^*D=f^*E$.
Reduce to the case $E=0$ for $D:=D-h^*E$.
Then $D=0$ too.

\end{proof}

If there exists an $\R$-Cartier b-$\R$-divisor $\sL$ such that
$\K+\D\sim_\R f^*\sL$ then
a {\em descent\/} of the moduli part $\sM^X$ on the base $Z$
is well-defined.
It is a {\em moduli part on base\/}:
$$
\sM^Z\eqdef\sD\smod\eqdef\sL-\K_Z-\D\sdiv.
$$
So,
$$
f^\circ\sM^Z=f^*\sL-f^\circ(\K_Z+\D\sdiv)\sim_\R
\K+\D-f^\circ(\K_Z+\D\sdiv)=\sM^X,
$$
that gives {\em log adjunction\/}
$$
\K+\D\sim_\R f^*(\K_Z+\D\sdiv+\sD\smod).
$$
The last equivalence also known as
the canonical class formula.

Of course, the pull-back $f^*\sL$
in the definition of $\sM^Z$ can be replaced
on more general $f^\circ\sL$.
However, this does not give anything new.
Indeed, if $f^\circ\sL$ is an $\R$-Cartier b-$\R$-divisor then
$\sL$ is also an $\R$-Cartier b-$\R$-divisor and $f^\circ\sL=f^*\sL$.
Additionally, according to our assumptions,
the b-$\R$-divisor $\K+\D$ is $\R$-Cartier and
the $\R$-linear equivalence preserves the last property.
(The $\R$-Cartier property is  preserved also
for the numerical equivalence $\equiv$ over $Z$
that allows to define a numerical version of log adjunction.)

\begin{prop}
The moduli part $\sD\smod$ is an $\R$-Cartier b-$\R$-divisor
if and only if the divisorial part $\D\sdiv$ satisfies {\em BP\/}.
\end{prop}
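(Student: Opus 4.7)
The plan is to unwind the definition $\sD\smod = \sL - \K_Z - \D\sdiv$ and reduce the claim directly to Proposition \ref{bp_equiv_Car}, which already identifies BP for $\D\sdiv$ with the $\R$-Cartier property of $\K_Z+\D\sdiv$. The only nontrivial input needed is the $\R$-Cartierness of $\sL$, which is built into the hypothesis under which $\sD\smod$ is defined at all.

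For the forward direction, I would assume $\D\sdiv$ satisfies BP. By Proposition \ref{bp_equiv_Car} this is equivalent to $\K_Z+\D\sdiv$ being an $\R$-Cartier b-$\R$-divisor. Since $\sL$ is $\R$-Cartier by the standing hypothesis in the definition of $\sD\smod$, the difference
$$
\sD\smod = \sL - (\K_Z+\D\sdiv)
$$
is then a difference of two $\R$-Cartier b-$\R$-divisors, hence $\R$-Cartier.

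Conversely, suppose $\sD\smod$ is $\R$-Cartier. Rearranging the same identity,
$$
\K_Z+\D\sdiv = \sL - \sD\smod,
$$
exhibits $\K_Z+\D\sdiv$ as a difference of two $\R$-Cartier b-$\R$-divisors, hence $\R$-Cartier; applying Proposition \ref{bp_equiv_Car} once more yields BP for $\D\sdiv$.

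The only point I expect to be even mildly subtle, rather than a genuine obstacle, is that $\sL$ (and hence $\sD\smod$) is defined only up to $\R$-linear equivalence. But $\R$-Cartierness of a b-$\R$-divisor is preserved under $\sim_\R$, as already noted in the paragraph preceding the proposition, so the class of $\sD\smod$ is unambiguously $\R$-Cartier or not, and the argument above is insensitive to the choice of representative.
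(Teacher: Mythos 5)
Your argument is correct and is essentially the paper's own proof: both reduce the statement to the observation that, since $\sL$ is $\R$-Cartier by hypothesis, $\sD\smod=\sL-(\K_Z+\D\sdiv)$ is $\R$-Cartier precisely when $\K_Z+\D\sdiv$ is, which is the stated equivalent of BP for $\D\sdiv$. Your extra remark about invariance under $\sim_\R$ only makes explicit a point the paper already notes just before the proposition.
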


\begin{proof}
By definition $\sD\smod$ is an $\R$-Cartier b-$\R$-divisor
if and only if $\K_Z+\D\sdiv$ is $\R$-Cartier.
The last property is equivalent to BP for
the b-$\R$-divisor $\D\sdiv$.
\end{proof}

Unlike $\sM^X$,  the moduli part $\sM^Z$ on the base is defined
up to  an $\R$-linear equivalence.
However, $\sM^Z$ is defined up to a $\Q$-linear equivalence if
$\K+\D\sim_\Q f^*\sL$ for an $\R$-Cartier b-$\R$-divisor $\sL$
(possibly, with nonrational multiplicities).
More precisely, $\sM^Z$ is defined up to an $n$-linear equivalence if
$\K+\D\sim_n f^*\sL$.
Respectively in the log adjunction,
the equivalence $\sim_\R$ can be replaced by
$\sim_\Q$ or, more precisely, by $\sim_n$:
$$
\K+\D\sim_n f^*(\K_Z+\D\sdiv+\sD\smod).
$$

In general, according to Examples~\ref{exs_bir_finite}, (5-6) and
Proposition~\ref{bp_equiv_Car} b-$\R$-divisors $\sD\umod,\sD\smod$
are not $\R$-Cartier.

\begin{ex} \label{exs_bir_finite}
(1) Let $(X/Z,D)$ be a log pair with a birational contraction $f\colon X\to Z$,
that is,
$f$ is a proper birational map of normal varieties.
Then $D\sdiv=f_*D$ and $D\umod=0$.
Moreover, $\sD\umod=0$ and is a b-divisor and
the divisorial part $\D\sdiv$ satisfies BP.
The naive moduli part $K+D-f^*(K_Z+D)$ is
the discrepancy $\R$-divisor of $(Z,D)$ on $X$, if
$(Z,D)$ is a log pair and $f_*D=D$ as b-$\R$-divisors, equivalently,
the $\R$-divisor $D$ does not have exceptional components and is
a proper birational preimage of $f_*D$ with respect to $f$.
This situation corresponds to the definition of discrepancies
for the pair $(Z,D)$ and the discrepancies are determined
for prime divisors on the blowup $X/Z$.
In general, however, more natural to present the codiscrepancy b-$\R$-divisor
$\D=\B(X,D)=-\A(X,D)$ as the divisorial part of adjunction
for $(X/Z,D)$:
$$
\D=\D\sdiv.
$$
In particular, BP for $\D\sdiv$ is stable over $Z$, that is, $\K_Z+\D\sdiv$
is an $\R$-Cartier b-$\R$-divisor, stable over $Z$, if
$K+D\equiv 0$ over $Z$ and $K_Z+f_*D$
is an $\R$-Cartier $\R$-divisor on $Z$, equivalently,
$(X/Z,D)$ is a $0$-pair or
$(X,D)$ and $(Z,f_*D)$ are crepant over $Z$.
In this situation $\D\sdiv=\B(Z,f_*D)=-\A(Z,f_*D)$.

Notice that the birational map $f$ should be proper to define
a moduli part of adjunction.
Indeed, a birational map $f$ is surjective
(at least in codimension $1$) for any base change
if and only if the map is proper.

(2)
Let $(C/C',D)$ be a log pair with a surjective map
$f\colon C\to C'$ of normal curves,
separable, if the characteristic of
the base field  $k$ is positive.
Then for every point $p\in C'$ the geometric fiber $f^*p$
can be identified with the divisor of the fiber $f^*p=\sum m_i p_i$,
where the sum runs over all point $p_i$ of the fiber $f\1p$ and
$m_i=\mult_{p_i}f$ denotes the multiplicity of $f$ in $p_i$.
Denote respectively by $r_i$ the ramification index of
$f$ in $p_i$.
(If the characteristic  of $k$ does not divide $m_i$
then $r_i=m_i-1$.)
Let $d_i=\mult_{p_i}D$ be the multiplicity of
the $\R$-divisor $D$ in $p_i$.
Then the multiplicity $d'=\mult_pD\sdiv$
of the divisorial part of adjunction
can be determined by the formula
$$
d'=\max\{\frac{r_i+d_i}{m_i}\}.
$$
In characteristic $0$, if
$D$ has the same multiplicities $d_i=d\le 1$ over $p$,
the multiplicities of divisorial and moduli parts are
$$
d'=\frac{m-1+d}m
\text{ and }
\mult_{p_i}D\umod=(d-1)(1-\frac{m_i}m)
$$
respectively, where $m=\max\{m_i\}$.
Since the moduli part is defined only up to
an $\R$-linear equivalence,
the last formula is meaningful only for a complete curve.
In particular, if $D=0$ then $d'=(m-1)/m$.
�� �⮬ ����쭠� ����
$$
D\umod\le 0
$$
and is equal to $0$ if and only if
all $m_i=m$, for instance:
$C/C'$ is a Galois covering.
Next Example~(3) generalizes the last statement.

Notice that $D\umod=0$ also if all $d_i=1$
for $m\ge 2$ or all $d_i=d$ for $m=1$
(cf. the maximal property in
Proposition-Definition~\ref{maximal_log_pair}).

(3) Let $f\colon X\to Z$ be a (finite) Galois covering and
$(X/Z,D)$ be a log pair with an invariant $\R$-divisor $D$.
Then $\sD\umod=0$ by Example~(2).
Indeed, on one hand, hyperplane sections
reduce the determination of the moduli part in prime divisors
to a curve case.
On the other hand, we can blow up every given prime divisor
preserving the assumptions.

(4) Let $f:X\to Z$ be an unramified  double covering
of surfaces and $C$ be a nonsingular curve on the base $Z$ which
splits on the the covering $X$, that is, $f^*C=C_1+C_2$, where
$C_1,C_2$ are nonsingular curves on $X$.
First consider a log pair $(X/Z,D)$ with a divisor $D=C_1$.
Then by Example~(2) $D\sdiv=C$, the b-divisor $\D\sdiv=\B(Z,C)$
satisfies BP and stable over $Z$.

Remove now a closed point $p\in C_1$ on $X$.
Then for the log pair $(X'/Z,D)$ with $X'=X\setminus p$,
the map $X'\to Z$ is surjective and surjective for
every birational base change.
So, the b-divisor $\D\sdiv$ is well-defined but
does not satisfies BP.
More precisely,
$\D\sdiv=\B(Z,0)$ over $f(p)$ and $=\B(Z,C)=\B(Z,0)+\overline{C}$ over
the other points of the base $Z$.
This is why for log adjunction we usually assume
that $f$ is proper.

(5) Let $f:X\to Z$ be an unramified double covering
of surfaces and $C_1,C_2$ be nonsingular curves on the base $Z$,
which intersect transversally in a single point $p\in Z$ and
split on $X$, that is, $f^*C_1=D_1+D_1'$ and
$f^*C_2=D_2+D_2'$, where
$D_1,D_1',D_2,D_2'$ are nonsingular curves on $X$.
We suppose also that $D_1\cap D_2=D_1'\cap D_2'=\emptyset$.
Take a log pair $(X/Z,D)$ with an $\R$-divisor $D=d_1 D_1+d_2 D_2+
D_1'+D_2'$, where $d_1,d_2$ are real numbers $<1$ and
linearly independent over the rational numbers, that is,
the equality $a_1 d_1+a_2 d_2=a$ � $a_1,a_2,a\in\Q$, is possible
only for $a_1=a_2=a=0$.
Then the b-$\R$-divisor $\D\sdiv$ does not satisfy BP.

More precisely, as in Example~(4) above
the b-$\R$-divisor $\D\sdiv$ does not satisfy BP
over $p\in Z$.
Indeed, for the blowup $Z'\to Z$ of
the nonsingular point $p$,
the multiplicity of $\D\sdiv$ in the exceptional divisor $E$ is
$d=\max\{d_1,d_2\}$ and $d<1$ (but $D\sdiv=C_1+C_2$).
Denote by $X'=X\otimes_Z Z'$ the corresponding blowup of $X$.
On the covering $X'$ of $Z'$, $E$ splits into two
exceptional curves of the 1st kind $E_1,E_2$, where $E_1$
intersects the proper transform of $D_1$.
Then by semiadditivity (2) in General properties \cite[Lemma~7.4]{PSh}
the pair $(X/Z,D)$ can be replaced by the log pair $(X'/Z',D')$ with
the $\R$-divisor $D'=D+E_1+(1+d_2-d_1)E_2$, where
$D$ denotes the proper birational preimage
of the $\R$-divisor $D$ on $X'$.
We suppose also that $d=d_1> d_2$.
The new divisorial part of adjunction as the old one
satisfies BP everywhere except for the point $p'=C_1\cap E$,
where $C_1$ denotes the proper birational preimage of $C_1$ on $Z'$.
The new divisorial part of adjunction stabilizes on same blowups
if such exist.
The multiplicities $d_1,1+d_2-d_1$ of the new pair are
linearly independent over the rational numbers.
The prove concludes the induction on the number of blowups
required to get a stability of BP.
The process will newer stops.
This contradicts BP.

The same argument shows BP for
every $\Q$-divisor $D$ (cf. Theorem~\ref{Q_div_adj} below).

Similarly, one can construct
an example with a b-$\Q$-divisor
$\D\not=\B(X,D)$ instead of $D$ without BP,
the divisorial part of adjunction of which $\D\sdiv$
satisfies BP.

(6) Let $\sM_5$ denote the moduli space of
stable rational curves with $5$ marked points,
$\sU_5\to \sM_5$ be the corresponding universally family and
$\sP_1,\dots,\sP_5$ be sections corresponding
to the marked points.
The family is a smooth three dimensional (nonstandard)
conic bundle over a nonsingular surface $\sM_5$.
Let $D_1,D_1'$ be divisors on $\sU_5$ sweeping
respectively by components $C,C'$ of stable curves
$C\cup C'$ with points $p_1,p_2,p_3\in C,p_4,p_5\in C'$, where
$p_i=\sP_i\cap(C\cup C')$.
Similarly, divisors $D_2',D_2$ on $\sU_5$ are sweeping
respectively by components $C,C'$ of stable curves
$C\cup C'$ with points $p_1,p_2\in C',p_3,p_4,p_5\in C$.
As in Example~(5) take a log pair $(\sU_5/\sM_5,D)$
with an  $\R$-divisor $D=d_1 D_1+d_2 D_2+
D_1'+D_2'$, where $d_1,d_2$ are real numbers $<1$ and
linearly independent over the rational numbers.
Then the divisorial part of adjunction $\D\sdiv$
does not satisfy BP over the (closed) point $p$ of the base $\sM_5$,
corresponding to the stable curve $C_1\cup C_2\cup C_3$ with
points $p_1,p_2\in C_1,p_3\in C_2,p_4,p_5\in C_3$.
The b-$\R$-divisor $\D\sdiv$ has the same multiplicities over $p$
as the corresponding b-$\R$-divisor in Example~(5).
Adding to $D$ sections $\sP_i$ with
arbitrary multiplicities $\le 1$,
one can suppose that $D$ is $\R$-ample over $\sM_5$
with the same b-$\R$-divisor $\D\sdiv$.

Similarly one can construct an example with a $\R$-divisor $D$ such that
$K_{\sU_5}+D\sim_\R 0$ over $\sM_5$ and
$\D\sdiv$ does not satisfy BP.
(According to \cite[Theorem~8.1]{PSh}
the horizontal part of $D$ should be not effective, cf. also Example~\ref{max_mod_0_pair}
below.)
It is sufficient to find such multiplicities $a_1,\dots,a_5\le 1$
that $D:=D+\sum a_i \sP_i\equiv 0$ on
every irreducible component of the curve over the point $p$.
This gives an example over a neighborhood of $p\in\sM_5$.
Adding vertical prime divisors with appropriate multiplicities one
can find an example over $\sM_5$.
More precisely, one can find easily required multiplicities $a_i$,
when the multiplcities $d_1,d_2$ are close to $1$.
In this case one can pick up multiplicities $a_i$
close to $1/2$ for $i\not=3$ and  to $0$ for $i=3$
(actually $a_3<0$).

Again for rational $D$ BP holds.

\end{ex}

\begin{thm} \label{Q_div_adj}
Let $(X/Z,D)$ be a log pair with a $\Q$-divisor $D$,
proper $X/Z$ and under GLC of Definition~\ref{df-sdiv}.
Then the divisorial part of adjunction $\D\sdiv$ is
well-defined and always satisfies BP.
\end{thm}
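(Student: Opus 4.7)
The plan is to reduce, via log resolution and weak semistable reduction, to a toroidal family where the discriminant can be read off from local combinatorial data, and then to descend through a finite Galois base change. Rationality of $D$ enters in two essential ways: it is what makes weak semistable reduction applicable, and it forces the combinatorial multiplicities to lie in a fixed rational lattice, ruling out the infinite regress of $\Q$-linearly independent coefficients exhibited by Examples~\ref{exs_bir_finite}~(5)--(6).

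By the rationality property~(5) of the general properties, $\D\sdiv$ is automatically a b-$\Q$-divisor, so only BP needs to be proved. By birationality~(1), replacing $(X,D)$ with a crepant log resolution and blowing up the base $Z$ does not affect $\D\sdiv$; we may therefore assume $f\colon X\to Z$ is a morphism with $(X,\Supp D)$ log smooth and $\Supp D$ containing the vertical exceptional locus. Weak semistable reduction (Abramovich--Karu) then produces a birational modification $Z'\to Z$, a finite Galois cover $\pi\colon Z''\to Z'$, and a log-smooth model $f''\colon X''\to Z''$, birational to a resolution of $X\times_{Z'}Z''$, such that $f''$ is weakly semistable and the crepant pull-back $D''$ of $D$ has SNC support with rational coefficients.

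In this toroidal setting the multiplicity of $\D''\sdiv$ at each prime b-divisor of $Z''$ is given by a higher-dimensional analogue of the curve formula of Example~\ref{exs_bir_finite}~(2): it is a $\Q$-linear combination of the coefficients of $D''$ and the toric multiplicities of $f''$, and under any further toric blow-up of $Z''$ it transforms by the standard log discrepancy shift. This is precisely the assertion that $\D''\sdiv$ satisfies BP with stability over $Z''$. Descending via $\pi$ using semiadditivity~(2) and the base-side analogue of Example~\ref{exs_bir_finite}~(3) (Galois invariance of the discriminant), one obtains $\pi^\circ\D\sdiv=\D''\sdiv$ in codimension one over $Z'$, and combined with stability of $\D''\sdiv$ this upgrades to $\D\sdiv=\B(Z',(\D\sdiv)_{Z'})$, i.e.\ BP with stability over $Z'$.

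The main obstacle lies in the interplay between the semistable reduction theorem (deep but external) and the b-divisor formalism: one must verify that the discrepancy formula for $\D''\sdiv$ extends from toric blow-ups of $Z''$ to arbitrary ones, and that Galois descent upgrades codimension-one agreement to full stability rather than merely generic coincidence. Once these are in place, rationality of $D$ supplies the combinatorial finiteness that Examples~(5)--(6) show to be indispensable.
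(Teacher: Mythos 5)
Your architecture differs from the paper's: the paper uses transitivity of the divisorial part (Proposition~\ref{tras_div_adj}) and Stein factorization to split the problem into two cases --- finite morphisms and families of curves --- and only then invokes alterations for the curve case, whereas you apply weak semistable reduction to the whole family at once and descend through the Galois base change. That reorganization is legitimate in outline, and your descent step can be made rigorous via Example~\ref{crepant_pull_back}, Proposition~\ref{tras_div_adj} and the converse direction of Proposition~\ref{bp_equiv_Car} (finite morphisms have equidimensional fibers); note, though, that along the ramification of $\pi$ the correct relation is the crepant one, $\K_{Z''}+\D''\sdiv=\pi^*(\K_{Z'}+\D\sdiv)$, not $\pi^\circ\D\sdiv=\D''\sdiv$.

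The genuine gap is the claim that in the weakly semistable toroidal setting the multiplicity of $\D''\sdiv$ at a prime b-divisor of $Z''$ ``is a $\Q$-linear combination of the coefficients of $D''$ and the toric multiplicities'' and that its transformation law ``is precisely the assertion that $\D''\sdiv$ satisfies BP.'' It is not a linear combination: by Example~\ref{exs_bir_finite}, (2) the discriminant multiplicity is a \emph{maximum} over the vertical branches lying above the given divisor, hence only piecewise linear in the data, and Example~\ref{exs_bir_finite}, (5) --- an unramified, hence toroidal, double cover --- shows that toroidality alone does not give BP: the maximum can switch branches under every further blow-up and never stabilize when the coefficients are $\Q$-linearly independent. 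Asserting that the multiplicity ``transforms by the standard log discrepancy shift'' under arbitrary blow-ups is therefore assuming the conclusion. What is needed, and what the paper supplies, is a termination argument: blow up the codimension-two loci where two branches with coefficients $d=d_i\le d_j<1$ meet over the base but not upstairs, record that the new exceptional coefficient is $1+d-d_j$, and conclude termination because all coefficients remain in $\frac1m\Z\cap[d,1]$ for a fixed denominator $m$. This is where rationality enters quantitatively, not merely as a slogan about ``a fixed rational lattice.'' You correctly flag this verification as the main obstacle, but the proof of the theorem \emph{is} that verification; without it your argument establishes BP only where a single branch computes the discriminant on all models.
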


A proof below uses the following fact.

\begin{prop}[Transitivity of divisorial part of adjunction]
\label{tras_div_adj}
Let
$$
f\colon X\stackrel{h}{\twoheadrightarrow}
Y\stackrel{g}{\twoheadrightarrow} Z
$$
a composition of two proper surjective morphisms and
$(X/Z,D)$ be a log pair under the adjunction assumption
$\GLC$ over $Z$.
Then the pair $(X/Y,D)$ also satisfies $\GLC$.
Suppose that the divisorial part of adjunction $\D_Y$
of $(X/Y,D)$ satisfies BP, stable over the base $Y$.
Then the {\em transitivity\/} holds:
$(Y/Z,D_Y)$ is also a log pair, satisfying the adjunction assumption
$\GLC$, where $D_Y=(\D_Y)_Y$ the trace of
the b-divisor $\D_Y$ on $Y$, and
$$
\D_Z=(D_Y)_Z
$$
holds, where $\D_Z,(D_Y)_Z$ denote respectively
divisorial parts of adjunction
of pairs $(X/Z,D),(Y/Z,D_Y)$.

The usage of the base $Y$ with stability of BP can be omitted.
Then, in the transitivity formula, the divisor $D_Y$ should be replaced
by the trace $D_{Y'}=(\D_Y)_{Y'}$ on a model $Y'$ of $Y$ over $Z$,
over which the stability of BP holds.
\end{prop}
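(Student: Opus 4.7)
The plan is to split the proposition into three tasks: verify $\GLC$ for $(X/Y,D)$; verify that $(Y/Z,D_Y)$ is a log pair satisfying $\GLC$; and check the b-divisorial equality $\D_Z=(D_Y)_Z$ multiplicity by multiplicity. Throughout, the main tools will be General properties~(2)--(3) and Corollary~\ref{lc_div_adj}, applied to the intermediate morphism $h\colon X\to Y$.

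The first task is immediate: since $g$ is surjective and $Y$ irreducible, $g(\eta_Y)=\eta_Z$, so the generic fibre of $f$ over $\eta_Y$ is contained in that over $\eta_Z$, and $\GLC$ over $Z$ restricts to $\GLC$ over $Y$. For the second task, the hypothesis that $\D_Y$ satisfies $\mathrm{BP}$ stably over $Y$ already yields, by definition of stability, that $(Y,D_Y)$ is a log pair with $\D_Y=\B(Y,D_Y)$. General property~(3) for $(X/Y,D)$ then reads: for every prime b-divisor $P$ of $Y$, $\mult_P\D_Y\le 1$ iff $(X,D)$ is lc over the generic point of $P$. Restricting to $P$ lying over $\eta_Z$ and using $\GLC$ for $(X/Z,D)$ delivers the lc property of $(Y,D_Y)$ at $\eta_Z$.

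For the transitivity, fix an arbitrary prime b-divisor $V$ of $Z$; after replacing $Z$ by a model on which $V$ becomes Cartier (allowed by General property~(1)), semiadditivity gives the characterisation
\[
1-\mult_V\D_Z=\sup\{s\in\R:(X,D+sf^\circ V)\text{ is lc over }\eta_V\},
\]
and symmetrically $1-\mult_V(D_Y)_Z=\sup\{s:(Y,D_Y+sg^\circ V)\text{ is lc over }\eta_V\}$. The bridge between the two is the $X/Y$ adjunction: the equality $f^\circ V=h^\circ g^\circ V$ together with verticality of $g^\circ V$ over $Z$ show that the perturbed pair $(X,D+sf^\circ V)$ still satisfies $\GLC$ over $Y$, and semiadditivity for $X/Y$ gives its divisorial part on $Y$ as $\D_Y+sg^\circ V$ (still $\mathrm{BP}$-stable over $Y$). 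Corollary~\ref{lc_div_adj}, which is proved pointwise at prime b-divisors via General property~(3), then yields the equivalence of the two lc conditions over $\eta_V$, hence equality of the two suprema. This gives $\mult_V\D_Z=\mult_V(D_Y)_Z$ at every prime b-divisor $V$, i.e., the b-divisorial identity on every model of $Z$.

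For the last clause, if $\mathrm{BP}$ is not stable on $Y$ itself, pass to a higher model $Y'\to Y$ on which stability does hold, set $D_{Y'}=(\D_Y)_{Y'}$, and apply the argument above to $X\to Y'\to Z$; by birationality (General property~(1)) the divisorial part for $X/Y'$ is still the b-divisor $\D_Y$, so nothing in the argument changes. The main obstacle will be in the third paragraph: extracting from Corollary~\ref{lc_div_adj} its \emph{relative} form over $\eta_V$ rather than the globally stated version, which requires reading off the pointwise character inherited from General property~(3).
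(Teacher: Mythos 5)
Your proposal is correct and follows essentially the same route as the paper: reduce to a single prime b-divisor of $Z$, use birationality and semiadditivity to turn the multiplicity of the divisorial part into a log canonical threshold condition over its generic point, and transfer that condition between $X$ and $Y$ via General property~(3) and the localized form of Corollary~\ref{lc_div_adj}. The only difference is packaging — you compare two suprema, while the paper normalizes to $d_W=1$ and checks that ``lc but not klt'' descends to $Y$ via an explicit divisor with multiplicity $1$ — and the delicate point you flag (localizing Corollary~\ref{lc_div_adj} over $\eta_V$) is exactly the step the paper also relies on.
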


\begin{proof}
If $(X,D)$ is lc over an open subset
$U\subset Z$ then the same holds over
the open set $g\1 U\subset Y$.
By surjectivity of $g$ the preimage $g\1 U$
is not empty if $U$ is not empty.

The transitivity follows from more precise result --
General property~(3).
It is sufficient to verify the transitivity in each prime b-divisor
$W$ of $Z$.
By General properties~(1-2)
we can suppose that $W$ is a prime divisor on $Z$ and
$d_W=\mult_W\D_Z=1$.
Then by General property~(3), $(X,D)$ is lc but not klt
over the generic point of $W$.
After blowing up of $X$ and changing of $D$ on its crepant $\R$-divisor,
we can suppose that $\mult_V D=1$ for some prime divisor $V$ on $X$ over $W$,
that is, $f(V)=W$.
We can suppose also that $h(V)$ is a prime divisor on $Y$.
Again by General property~(3), $d_{f(V)}=\mult_{f(V)} D_Y=1$ and
$(Y,D_Y)$ is not klt over the generic point of $W=g(h(V))$.
On the other hand, $(Y,D_Y)$ is lc over the generic point of $W$
by Corollary~\ref{lc_div_adj}
(and the open property of lc).
Thus $\mult_W{(D_Y)_Z}=1$ too.

\end{proof}

In the proof of Theorem~\ref{Q_div_adj}
we use also the following construction.

\begin{ex}[ Crepant pull-back] \label{crepant_pull_back}
Let $(Z,D_Z)$ be a log pair and $f\colon X\to Z$ be a morphism of normal
varieties, separable, finite and surjective over the generic point
(separable alteration).
Then there exits a natural and unique divisor $D$ on $X$,
converting the variety $X$ into a log pair $(X,D)$ {\em crepant\/}
to $(Z,D_Z)$, that is, for every canonical divizor
$K_Z=(\omega_Z)$ on $Z$, where $\omega_Z$ is a nonzero rational
differential form of the top degree on $Z$,
$$
K+D=f^*(K_Z+D_Z),
$$
holds, where $K=(f^*\omega)$ is a canonical divisor on $X$.
In this case by Example~\ref{exs_bir_finite}, (2) $D\sdiv=D_Z$
and $D\umod=D\smod=0$.
If the map $f$ is proper and surjective then
$\D\sdiv=\B(Z,D_Z)$ and $\sD\smod=0$.
In other words, $\D\sdiv$ satisfies
BP and stable over $Z$.

\end{ex}

\begin{cor} \label{addit_mod_adj}
$$
\sM_{X/Z}\sim \sM_{X/Y}+f^\circ\sM_{Y/Z}
$$
(actually, $=$ for appropriate canonical b-divisors of $X,Y,Z$),
where $\sM_{X/Z},\sM_{X/Y},\sM_{Y,Z}$ are respectively
the moduli part of adjunction for $(X/Z,D),(X/Y,D), (Y/Z,\D_Y)$.
\end{cor}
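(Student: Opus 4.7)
The plan is to expand each of the three moduli parts from its definition, apply transitivity of the divisorial part to identify the two discriminants on $Z$, and then collapse a composition of divisorial pull-backs. (I interpret the $f^\circ$ appearing in the statement as $h^\circ$, since $\sM_{Y/Z}$ lives on $Y$ rather than on $Z$.) Assume the standing hypothesis of Proposition~\ref{tras_div_adj}, so that $\D_Y$ satisfies BP stable over $Y$ and $\B(Y,D_Y) = \D_Y$ with $D_Y = (\D_Y)_Y$. The definitions then give
\begin{align*}
\sM_{X/Z} &= \K + \D - f^\circ\bigl(\K_Z + \D\sdiv\bigr), \\
\sM_{X/Y} &= \K + \D - h^\circ\bigl(\K_Y + \D_Y\bigr), \\
\sM_{Y/Z} &= \K_Y + \D_Y - g^\circ\bigl(\K_Z + (\D_Y)\sdiv\bigr),
\end{align*}
where $(\D_Y)\sdiv$ denotes the divisorial part of $(Y/Z, D_Y)$.

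Two ingredients are then needed. First, Proposition~\ref{tras_div_adj} yields the transitivity identification $(\D_Y)\sdiv = \D\sdiv$ as b-$\R$-divisors on $Z$. Second, the divisorial pull-back is functorial along the factorization $f = g \circ h$: one has $f^\circ = h^\circ \circ g^\circ$ as maps $\bDiv_\R Z \to \bDiv_\R X$. With these, the $h^\circ(\K_Y + \D_Y)$ terms cancel and
\[
\sM_{X/Y} + h^\circ \sM_{Y/Z}
= \K + \D - h^\circ g^\circ(\K_Z + \D\sdiv)
= \K + \D - f^\circ(\K_Z + \D\sdiv)
= \sM_{X/Z}.
\]

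The symbol $\sim$ in the statement reflects the fact that $\K, \K_Y, \K_Z$ are each defined only up to linear equivalence, corresponding to the choice of a nonzero rational top form. Fixing $\omega_Z$ on $Z$ and then taking $\omega_Y = g^*\omega_Z$ on $Y$ and $\omega_X = f^*\omega_Z$ on $X$ (nonzero rational differentials by dominance of $g$ and $f$) supplies a coherent choice of canonical b-divisors that upgrades $\sim$ to the equality asserted in the parenthetical claim.

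The only nonroutine input is the functoriality $f^\circ = h^\circ \circ g^\circ$ for Weil b-divisors, which is the main step to justify. On $\R$-Cartier b-divisors it is the classical identity $f^* = h^* g^*$. In general it reduces, by the uniqueness characterization of $f^\circ$ recalled after Definition~\ref{df-sdiv}, to checking on prime Weil divisors of $Z$: near the generic point of such a divisor both composed pull-backs coincide with the ordinary Cartier pull-back, and this codimension-$1$ agreement persists on every birational base change of $Z$, which pins down both b-divisors uniquely. This verification is essentially formal but is the one place where the structure of $f^\circ$ on non-$\R$-Cartier b-divisors must be used.
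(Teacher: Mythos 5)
Your proposal is correct and follows essentially the same route as the paper: expand the three moduli parts from the definition, use the transitivity of Proposition~\ref{tras_div_adj} to identify $(\D_Y)\sdiv$ with $\D\sdiv$, apply the functoriality $f^\circ=h^\circ\circ g^\circ$ of the divisorial pull-back, and cancel, with the choice of compatible canonical (b-)divisors giving equality in place of $\sim$. The paper's proof is the same computation written in two lines with both ingredients left implicit; your version merely makes them, and the notational point that the $f^\circ$ in the statement is pull-back along $X\to Y$, explicit.
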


\begin{proof}
\begin{align*}
\sM_{X/Z}=&\K+\D-f^\circ(g^\circ(\K_Z+\D_Z))=
\K+\D-f^\circ(\K_Y+\D_Y-\sM_{Y/Z})=\\
&\K+\D-f^\circ(\K_Y+\D_Y)+f^\circ\sM_{Y/Z}=\sM_{X/Y}+f^\circ\sM_{Y/Z}.
\end{align*}
Since the moduli parts are defined up to a linear equivalence we can
choose required canonical b-divisors to have $=$ instead of $\sim$.
\end{proof}

\begin{proof}[Proof of Theorem~\ref{Q_div_adj}]
$\D\sdiv$ is well-defined by \cite[7.1-2]{PSh}
(cf. also Definition~\ref{df-sdiv} above).
Using the birational nature of $\D\sdiv$, namely,
General properties (1) and (5),
and by Proposition~\ref{tras_div_adj} one can reduce
the proof to two cases:
morphism $f: X\to Z$ is finite or
is a family of curves.

First we verify BP for the finite morphisms.
In this part of the proof, the assumption,
that the base field  $k$ has the characteristic $0$,
is essentially important.
For simplicity one can suppose that $k=\C$ is
the field of complex numbers.
According to Hironaka after
an appropriate blowup of the base and
replacing $X$ by an induced normal base change
one can assume that $Z$ is nonsingular with reduced
($1$ are the only nonzero multiplicities) divisor $\Delta$
with only simple normal crossing and such that
$f$ is only ramified and $D$ is supported over $\Supp\Delta$.
General hyperplane sections and dimensional induction
reduce the problem to a verification of BP
over a neighborhood of a finite set of isolated closed points $z\in Z$.
Over a sufficiently small neighborhood of every such point $z$
in the classical topology,
the covering $f$ is a union of simple branches.
Again by the transitivity of Proposition~\ref{tras_div_adj}
it is sufficient to verify BP in the simple maping case
when the fiber $f\1 z=X_z$ consists of a single point,
and in the case of unramified map $f$.
In the first case, up to an analytic isomorphism,
the pair $(X/Z,D)$ is toric, that is, $X\to Z$ is
a toric finite map with an invariant divisor $D$ on $X$.
The invariant divisor on the base $Z$ is $\Delta$.
By General properties (2-3)
one can suppose that the divisorial part of adjunction
is reduced: $D\sdiv=\Delta$.
Then $D$ is also reduced and BP holds over $Z$.
(Actually, in this toric situation,
the rationality of $D$ does not matter.)

The second case, with an unramified map,
is a bit harder.
Again by the transitivity of Proposition~\ref{tras_div_adj}
we consider the case of two sheets: $X=X_1\cup X_2, X_1\cap X_2=\emptyset$  and
the map $X\to Z$ consists of two isomorphisms
$X_1,X_2\to Z$.
General properties (2-3)
allow to suppose that $D\sdiv=\Delta$.
However, this time we know only that the multiplicities of the divisor $D$
are not exceeding $1$ over $\Supp\Delta$ and are equal to $0$ otherwise.
If all multiplicities of $D$ over $\Supp \Delta$
are equal to $1$, BP holds.
Otherwise there exists a multiplicity $d_i<1$
over a prime divisor of $\Supp\Delta$.
By construction and our assumptions every such multiplicity is rational and
they form a finite set $\{d_i\}$.
Hence there exists a positive integer $m$ such that
all numbers $m d_i$ are integral.
The following algorithm allows to find a model of $Z$
over which $\D\sdiv$ is stable and BP holds.

Put $d=\min\{d_i\}$.
Let $D_i$ be a prime divisor on $X$
over $\Supp\Delta$ with the multiplicity $\mult_{D_i}D= d_i=d$.
By construction and our assumptions $d<1$.
If $D_i$ intersects on $X$ all prime divisors $D_j$
over $\Supp\Delta$ with multiplicities $d_j<1$ and
with the image $f(D_j)$ intersecting $f(D_i)$ on $Z$ then
BP holds over a neighborhood of  ��� $f(D_i)$
(to verify this one can use Example~\ref{exs_bir_finite}, (2)).
(Here as usually a prime divisor means closed irreducible subvariety
of codimension $1$, but not only its generic point.)
Moreover, the multiplicity $d_i$ can be  replaced by $1$,
not changing $\D\sdiv$.
After finitely many steps either there are no prime divisors $D_i$
over $\Supp\Delta$ with the multiplicity $d$, or there is another prime divisor
$D_j$ over $\Supp\Delta$ with multiplicity $d_j<1$, with
$D_i\cap D_j=\emptyset$, but
$f(D_i)\cap f(D_j)\not=\emptyset$.
The last intersection is a nonsingular subvariety of the base $Z$ of codimension $2$.
In the first case take new $d=\min\{d_i\}$.
The algorithm terminates because $d< 1$ and
$m d\in \Z$.
In the second case blow up the intersection $f(D_i)\cap f(D_j)$ and
respectively the covering $X$.
Let $X'\to Z'$ be the induced unramified  covering
with blown up divisors $E_1,E_2,E$ on $X_1',X_2',Z'$ respectively,
where $X_1',X_2',Z'$ are blowups of varieties  $X_1,X_2,Z$.
To be more precise suppose that $D_i\subset X_1$.
Let $D,D_i,D_j$ denote proper birational transforms
of those divisors on $X'$ and $D'$ denote the divisor on $X'$,
corresponding to the crepant transform of the pair $(X,D)$.
Then $D_i\subset X_1',D_j\subset X_2'$ and
$$
\mult_{E_1} D'=d\le \mult_{E_2}D'=d_j<1.
$$
Again by Example~\ref{exs_bir_finite}, (2)
$\mult_{E}D'\sdiv=\max\{d,d_j\}=d_j$.
Now we replace the pair $(X/Z,D)$ by $(X'/Z',D'+(1+d-d_j)E_1+E_2)$
and return to the beginning of the algorithm.
The validity of BP this does not change.
(The new divisor $\Delta'=\Delta+E$.)
We contend that the algorithm stops when $\{d_i\}=\emptyset$ that
gives BP with stability over the constructed base.
Indeed, if $d_j=d$ then a new prime divisor over $\Supp\Delta$ with $d_i<1$
will not appear and one intersection $f(D_i)\cap f(D_j)\not=\emptyset$
disappear.
If $d_j>d$ then exacly one new prime divisor $E_1$ over $\Supp\Delta$ will appear
with the multiplicity $1>1+d-d_j>d$.
Notice that again $m(1+d-d_j)\in \Z$.
Hence a new prime divisor $D_j$ with the multiplicity $d$ will not appear and
after finitely many steps $D_i$ intersects all
prime divisors $D_j$ over $\Supp\Delta$ with multiplicities $d_j<1$ for which
$f(D_i)\cap f(D_j)\not=\emptyset$.
In this case, as we did it before we increase
the multiplicity $d_i=d$ up to $1$.

In conclusion consider the case with
the map $f$ being a family of curves.
After an appropriate surjective proper base change $Z'\to Z$
with $\dim Z'=\dim Z$ (alteration) one can suppose that the family
is maximally good.
More precisely, there exists a commutative diagram
$$
\begin{array}{ccc}
X'&\rightarrow & X\\
f'\downarrow & & f\downarrow\\
Z'&\rightarrow & Z
\end{array}
$$
such that $f'$ is a projective family of (semi)stable curves
with the smooth generic fiber and
the preimage of horizontal prime divisors of
$\Supp D$ is a union of sections $S_i$ of the family $f'$
\cite[Theorem~2.4]{dJ}.
(Sections $S_i$ are prime divisors too.)
Maps $X'\to X$ and $Z'\to Z$ are finite generically.
Let $D'$ be the crepant pull-back of Example~\ref{crepant_pull_back}.
By construction $D'$ is a $\Q$-divisor.
Then by the transitivity of Proposition~\ref{tras_div_adj}
for the composition,
$X'\to X\to Z$ gives the same divisorial part of adjunction
for the pair $(X'/Z,D')$ as for $(X,D)\to Z$, that is,
gives $\D\sdiv$.
On the other hand, the transitivity of Proposition~\ref{tras_div_adj}
for the composition
$X'\to Z'\to Z$ and already established BP for
the map $Z'\to Z$, BP for $\D\sdiv$
follows from BP for the divisorial part of adjunction
of the pair $(X'/Z',D')$.
We contend that BP holds for $(X'/Z',D')$
over $Z'$.
By \cite[Theorem~2.4, (vii)(b) and the property~2.2.2]{dJ},
$X'\to X$ is unramified generically over $Z$ or $Z'$.
The horizontal part of the divisor $D'$ has the same
multiplicities in the sections $S_i$ as in the corresponding
prime divisors of $X$ in $D$.
In our situation
the assumption $\GLC$ means that all $\mult_{S_i} D'\le 1$
for all sections $S_i$.
This follows from $\GLC$ for $(X/Z,D)$ by construction
and \cite[Remark~2.5]{dJ}.
Again by \cite[Remark~2.5 and Theorem~2.4]{dJ}, after
an additional blowup of $Z'$ and a normal base change of $f'$, we can suppose that $X'/Z'$
is toriodal with invariant $\Supp D'$.
Thus $D'\sdiv$ can be computed by an lc threshold with
a vertical divisorial lc center in $\Supp D'$.
More precisely,
to determine $D'\sdiv$ one can suppose that
the base $Z'$ is a curve.
By construction all sections $S_i$ and
vertical divisors, including vertical  prime components
of $\Supp D'$, form a divisor with toroidal crossings.
Thus in the determination one can reduce
all horizontal prime components
of $\Supp D'$ in $S_i$, that is,
to replace their multiplicities by $1$.
Adding locally (in classical or etale topology)
(malti)sections $S_i$ with $\mult_{S_i} D'=0$,
preserving mutually disjoint and toroidal property of (multi)sections $S_i$,
one can suppose that $S=\sum S_i$ is ample over $Z'$ and
is in the reduced part of $D'$.
(The ample divisor intersects any vertical divisor.)
By Example~(2) this reduce the determination of
$\D'\sdiv$ to that of
the divisorial adjunction of the log pair $(S/Z',D'_S)$,
where $S=\cup S_i\to Z'$ is a toroidal covering and
$D'_S$ is the adjunction of $D'$ on $S$ supported in the invariant divisor.
Under the base change the crepant divisor change of $D_S'$
will be the adjunction of the crepand change of $D'$ on a modification of $S'$.
Thus this case also follows from already known one
for finite (toroidal) morphisms\footnote{
F.~Ambro informed the author that he has an alternative
proof for toroidal morphisms with invariant $\Supp D'$.}.

\end{proof}

Theorem~\ref{Q_div_adj} and Examples~\ref{exs_bir_finite}, (5-6)
show that, in general BP is unstable with respect to
multiplicities.
However, BP holds as a certain good limit, that is, $\D\sdiv$
is always {\em pseudo\/}-BP.
Examples~\ref{exs_bir_finite}, (5-6) also shows
that conditions on vertical component multiplicities of  $D$
are important.
From the birational point of view over $Z$,
this should not be so important that shows the next result.
Its proof is also a preparation to the proof of our main construction
in Proposition-Definition~\ref{maximal_log_pair}.

\begin{thm} \label{torific}
Let $(X/Z,D)$ be a log pair under $\GLC$.
Then there exists a log pair $(X'/Z',D')$ such that
\begin{description}
\item[\rm (1)]
morphisms $X\to Z,X'\to Z'$  are birationally equivalent;
and
\item[\rm (2)]
the pair $(X'/Z',D')$ is crepant to $(X/Z,D)$ over the generic point
of $Z$;
\item[\rm (3)]
$(X'/Z',D')$ satisfies $\GLC$ and
\item[\rm (4)]
$\D\sdiv'$ satisfies BP.
\end{description}
Hence $\D\sdiv$ satisfies BP over
the generic point of $Z$.
\end{thm}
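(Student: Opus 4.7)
The plan is to construct $(X'/Z',D')$ by first simplifying the geometry birationally via Hironaka's resolution and then exploiting the freedom granted by~(2) -- crepantness only over the generic point of $Z$ -- to redesign the vertical part of $D$ so that the divisorial part of adjunction becomes computable via toroidal methods, where BP is insensitive to irrationality of horizontal multiplicities.

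More concretely, I would first apply Hironaka's embedded resolution to obtain proper birational morphisms $X'\to X$ and $Z'\to Z$ together with a morphism $f'\colon X'\to Z'$ resolving the rational map induced by $f$, such that $X',Z'$ are smooth, the proper transform of $\Supp D$ together with all $f'$-exceptional divisors on $X'$ and the preimages of all exceptional divisors of $Z'\to Z$ form a simple normal crossing divisor $\Sigma\subset X'$, and $f'$ has equidimensional fibers outside a simple normal crossing divisor on $Z'$ (iterating resolution and Hironaka flattening as needed). Then I would define $D'$ by taking the crepant pull-back of $D$ on its horizontal support and assigning multiplicity $1$ to every vertical prime component of $\Sigma$. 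Because vertical components do not appear over the generic point of $Z$, this modification changes $D$ only away from the generic fiber, so (2) is preserved; the GLC condition~(3) is preserved because GLC for $(X/Z,D)$ is a statement over the generic points of $Z$ only, and it is unaffected by vertical changes on $X'$.

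The heart of the proof is then verification of BP for $\D\sdiv'$. Here I would invoke the transitivity of Proposition~\ref{tras_div_adj}, factoring $f'$ through intermediate models so as to reduce, piece by piece, to two situations already handled: finite morphisms and toroidal families. By construction, $(X',\Sigma)\to(Z',f'(\Sigma^{\mathrm{vert}}))$ is toroidal along the vertical directions, so the vertical contribution to $\D\sdiv'$ is governed entirely by the toric/toroidal formula $d'=\max\{(r_i+d_i)/m_i\}$ of Example~\ref{exs_bir_finite}(2) applied fiberwise; this formula yields a b-$\R$-divisor whose $\R$-Cartier nature on $Z'$ follows from combinatorial data on the toroidal fans and does not depend on rationality of horizontal multiplicities. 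The horizontal contribution, being crepant to the horizontal part of $D$, is handled by the unramified-covering analysis inside the proof of Theorem~\ref{Q_div_adj}, now without rationality because our vertical multiplicities are integers.

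The \emph{Hence} conclusion is then immediate: by the birationality clause of General property~(1), $\mult_W\D\sdiv=\mult_W\D\sdiv'$ for every prime divisor $W$ on $Z$ whose image on $Z'$ remains a divisor dominating an open part of $Z$, so BP for $\D\sdiv'$ supplies an $\R$-Cartier realization of $\K_Z+\D\sdiv$ over a neighborhood of the generic point of $Z$. The main obstacle will be the middle step: pushing the proof of BP through the case of irrational horizontal multiplicities. I expect this to be possible because once the vertical part of $D'$ is reduced and $f'$ is toroidal, the finite-morphism algorithm from Theorem~\ref{Q_div_adj}'s proof terminates for combinatorial reasons (the lattice of toroidal boundary divisors is finitely generated and the algorithm strictly simplifies the fan structure at each step), so the rationality hypothesis used in that earlier proof can be sidestepped. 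The required cross-check is that the successive blow-ups and vertical-multiplicity replacements never violate GLC and always keep us birational over $Z$ -- which they do, since every new divisor introduced is vertical and can be assigned any multiplicity in $[0,1]$.
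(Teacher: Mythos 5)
Your opening move --- saturating the vertical multiplicities of $D'$ to $1$ and observing that this costs nothing for (2) and (3) because crepancy and $\GLC$ are only demanded over the generic point of $Z$ --- is exactly the right one and coincides with the paper's. But there are two genuine gaps in what follows. First, Hironaka resolution plus flattening does not make the morphism $f'$ toroidal: having simple normal crossing divisors on $X'$ and on $Z'$ separately, even with equidimensional fibers, is much weaker than toroidality of the morphism itself, which is what you implicitly use when you claim the vertical contribution to $\D\sdiv'$ is ``governed entirely by the toric/toroidal formula.'' The paper must invoke the Abramovich--Karu toroidalization theorem \cite[Theorem~2.1]{AK} (applied to $X\to Z$ with the closed subset $\Supp D$) for precisely this reason; your construction does not supply that structure.

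Second, and more seriously, your verification of BP is deferred to the finite-morphism algorithm inside the proof of Theorem~\ref{Q_div_adj}, whose termination is established there only because all relevant multiplicities lie in $\frac{1}{m}\Z$; you acknowledge this and hope to ``sidestep'' it by a combinatorial finiteness of the fan. That hope is not a proof, and Example~\ref{exs_bir_finite},~(5) shows concretely why it is dangerous: with two $\Q$-independent irrational multiplicities the analogous blow-up process never stabilizes and BP genuinely fails. The paper does not re-run that algorithm at all. After toroidalization, reduction to a contraction (via Stein factorization, Proposition~\ref{tras_div_adj} and Theorem~\ref{Q_div_adj}, a step you omit) and saturation, the discriminant $D\sdiv'=\Delta$ is the \emph{reduced} invariant divisor of the toroidal base, so the target multiplicities $b_W=\mult_W\B(Z',\Delta)$ are integers $\le 1$. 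BP stable over $Z'$ is then proved by a double induction --- on $\dim\cent_{Z'}W$, reduced to a closed point by general hyperplane sections, and on the nonnegative integer $1-b_W$ --- where the case $b_W=1$ is handled by simultaneous toroidal blowups of $X'$ and $Z'$, and the case $b_W\le 0$ is raised to $b_W=1$ by replacing $D'$ with $D'+f'^{*}H$ for a general hyperplane section $H$ through the center and applying semiadditivity (General property~(2)). The integrality of $b_W$ is what makes this induction terminate; it is the key idea your proposal is missing, and it is available only because the whole morphism, not merely the two varieties, has been made toroidal with $\Supp D'$ invariant.
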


The last property is not surprising (cf. Proposition-Definition~\ref{maximal_log_pair}
below).

\begin{proof}
By Proposition~\ref{tras_div_adj}, Theorem~\ref{Q_div_adj} and Stein factorization
we can suppose that $X'\to Z'$ is a contraction and
to construct rational $\D\sdiv'$ or, moreover, integral.

We can replace $(X/Z,D)$ by a log pair $(X'/Z',D')$
which satisfies~(1-3); and
the crepant property~(3) holds everywhere.
Moreover, the morphisms $f'\colon X'\to Z'$ is toroidal
and the divisor $D'$ does not intersect toroidal embedding, that is,
is supported in the invariant divisor.
We can use for this a toroidalization of $X\to Z$
with the closed subset $\Supp D$ \cite[Theorem~2.1]{AK}.
Recall that by definition the toroidal embedding
is nonsigular on $X'$ and on $Z'$.
We contend that if we replace all vertical multiplicities
of divisor $D'$ outside of the toroidal embedding by $1$,
then BP~(4) will hold and is stable over $Z'$.
Notice that all horizontal prime components of $\Supp D'$ are
in the invariant divisor and have multiplicities $\le 1$ in $D'$.
Hence, in particular, $D\sdiv'=\Delta$ is
the complement to the toroidal embedding on $Z'$.
Notice that the last condition is preserved under
the modifications of the pair $(X'/Z',D')$ below.

Verify BP, stable over $Z'$, that is,
for every prime exceptional divisor $W$ of $Z'$,
the equality $d_W=\mult_W\D\sdiv'=b_W=\mult_W\B(Z',\Delta)$
holds.
We can establish this by induction on $1-b_W$.
Since $(Z',\Delta)$ is lc,
$b_W\le 1$ holds and by construction multiplicities $b_W$ are integers.
We will use also induction on dimension of
$\cent_{Z'}W$.
General hyperplane sections of the base allow to reduce
the dimension of such a center to $0$, that is,
to the case with a closed point $\cent_{Z'}W$.
Suppose first that $b_W=1$ and use induction
on the number of blowups, that is, we can suppose that
$W$ is the blowup of a closed point on $Z'$.
Then there exists a toroidal blowup of $X'$ with a center
over $W$.
Actually, we can toroidally  blow up both varieties $X',Z'$
simultaneously
(use subsequent toroidal resolution of $X'$ \cite[Proposition~4.4]{AK}).
As above $D\sdiv'=\Delta$ and $d_W=1$.

Suppose now that $b_W\le 0$.
If $\cent_{Z'}W$ is an lc center of $(Z',\Delta)$ then
we blow up it as above.
Thus we can suppose that $\cent_{Z'}W$ is not
an lc center.
Then we can add a general hyperplane
section $H$ through the center and preserve the toroidal property,
including the horizontal part of the divisor $D'$.
Replace $D'$ by $D'+f'^{*}H$.
This increases $b_W$ and completes induction
by General property~(2).

By the crepant assumption~(2) and General property~(1),
$\D\sdiv=\D\sdiv'$ holds over the generic point of the base.
This gives the last statement.
\end{proof}

In general the behaviour of multiplicities $d_W$
is unpredictable, except for general estimations
(see General properties~(3-6)).
However, in one important situation, a good behaviour of multiplicities of
divisorial part of adjunction is expected
(and already known in some cases).
See for details~\cite[Proposition~9.3, (i)]{PSh} and
\cite[6.8]{Sh20}.
Here we briefly recall only one main result about
hyperstandard multiplicities.

\begin{cor}
Let $\mathfrak R\subset[0,1]$ be a finite subset of rational numbers
and $d$ be a natural number.
Then there exists a finite subset of rational numbers $\mathfrak R'\subset [0,1]$
such that, for every $0$-pair $(X/Z,B)$ with $X/Z$ of weak Fano type,
$\dim X\le d$ and a boundary $B\in \Phi(\mathfrak R)$,
$$
B\sdiv\in\Phi(\mathfrak R')
$$
holds.
In particular, the divisorial part of adjunction $B\sdiv$
is also a boundary and
there exists a real number $\ep>0$ such that
every nonzero multiplicity of $B\sdiv$
is $\ge\ep$.
The number $\ep$ depends only on $\mathfrak R$ and $d$.
\end{cor}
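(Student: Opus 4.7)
The plan is to reduce the corollary entirely to the already-cited results on hyperstandard multiplicities, namely \cite[Proposition~9.3,(i)]{PSh} and \cite[6.8]{Sh20}, which are the substantive content. Those results assert precisely that for a $0$-pair $(X/Z,B)$ with $X/Z$ of weak Fano type, $\dim X\le d$, and boundary $B\in\Phi(\mathfrak R)$, the divisorial part $B\sdiv$ lies in $\Phi(\mathfrak R')$ for some finite set $\mathfrak R'\subset[0,1]\cap\Q$ depending only on $\mathfrak R$ and $d$. I would not reprove this here; the first sentence of the corollary is simply the statement of those references in the form needed.

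The remaining two assertions of the corollary are then extracted from the finiteness of $\mathfrak R'$ by a purely elementary argument. Since $\Phi(\mathfrak R')\subset[0,1]$ by definition, the inclusion $B\sdiv\in\Phi(\mathfrak R')$ immediately gives that $B\sdiv$ is a boundary. For the lower bound on nonzero multiplicities, recall that $\Phi(\mathfrak R')$ consists of numbers of the form $1-r/n$ with $r\in\mathfrak R'$ and $n$ a positive integer. For each fixed $r\in\mathfrak R'$ these values accumulate only at $1$ from below as $n\to\infty$, so the only way a sequence in $\Phi(\mathfrak R')$ can tend to $0$ is if $r/n\to 1$; with $\mathfrak R'$ finite, this forces $r=n=1$, which gives exactly the zero multiplicity. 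Thus $\Phi(\mathfrak R')\cap(0,1]$ admits a positive minimum, bounded below, for example, by $\min(\{1/2\}\cup\{1-r:r\in\mathfrak R',\,r<1\})$. Taking $\ep$ to be this minimum, which depends only on $\mathfrak R'$ and hence only on $\mathfrak R$ and $d$, completes the proof.

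The main obstacle is therefore entirely hidden in the cited results, and is not an obstacle for the corollary itself. Those results rest on deep boundedness phenomena for weak-Fano-type morphisms, together with preservation of hyperstandard multiplicities under adjunction for $0$-pairs. The necessity of the hypotheses is already illustrated in the body of the paper: the weak Fano type condition controls the base, and without the rationality or boundary hypotheses, Examples~\ref{exs_bir_finite},(5-6) show that BP itself may fail, in which case there is no meaningful $\mathfrak R'$ to describe $B\sdiv$.
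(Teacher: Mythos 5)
Your proposal is correct and matches the paper's own treatment: the main inclusion $B\sdiv\in\Phi(\mathfrak R')$ is taken directly from the cited results on hyperstandard multiplicities (\cite[Proposition~9.3,(i)]{PSh}, \cite[6.8]{Sh20}), and the existence of $\ep$ is deduced from the structure of $\Phi(\mathfrak R')$ --- the paper invokes its dcc property, while you unpack the same fact into the explicit bound $\min\left(\{1/2\}\cup\{1-r: r\in\mathfrak R',\ r<1\}\right)$. This is the same argument, merely made explicit.
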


The same follows for any proper $X/Z$ from
Index conjecture \cite[Conjecture~2]{Sh20}.

\begin{proof}
The last statement follows  from the dcc property of
$\Phi(\mathfrak R')$.
\end{proof}

With the effective b-semiampleness of the moduli part of adjunction $B\smod$
\cite[(7.13.3)]{PSh} \cite[Conjecture~3]{Sh20}
this reduces the birational effectiveness of an Iitaka map
\cite[Conjecture~1.1]{BZ}
to the same problem for a big log canonical divisor
(\cite[Theorem~1.3]{HMX} and cf. \cite[Theorem~1.3]{BZ}).
However, Index problem remains the main missing point
to complete the proof.

\begin{prop}
For a log pair $(X/Z,D)$ under $\GLC$,
the following properties are equivalent.
\begin{description}

\item[\rm (1)\/]
$(X/Z,D)$ is a {\em log stable\/} pair and $(Z,D\sdiv)$ is a log pair;

\item[\rm (2)\/]
$\D\sdiv$ satisfies BP stable over $Z$, in particular,
$(Z,D\sdiv)$ is a log pair;
and if

additionally $X\to Z$ has equidimensional fibers then
(1-2) are equivalent to

\item[\rm (3)\/]
$\sD\umod$ is $\R$-Cartier over $X$.

\end{description}
\end{prop}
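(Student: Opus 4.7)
My approach is to reduce everything to Proposition~\ref{bp_equiv_Car} together with the log adjunction formula $\K+\D=f^\circ(\K_Z+\D\sdiv)+\sM$. The equivalence (2)~$\Leftrightarrow$~(3) is essentially the content of Proposition~\ref{bp_equiv_Car} applied with the trivial models $X'=X$ and $Z'=Z$: since $(X,D)$ is a log pair under $\GLC$, the b-divisor $\K+\D$ is stable over $X$, so the forward implication follows from the first half of that proposition, while the converse is precisely its second half and this is where the equidimensional fibers hypothesis is needed.

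For (1)~$\Leftrightarrow$~(2), both conditions already include ``$(Z, D\sdiv)$ is a log pair'' (explicitly in (1), and as the ``in particular'' clause in (2)), so the remaining content is to link log stability of $(X/Z,D)$ with BP stability of $\D\sdiv$ over $Z$. I would argue via the log adjunction formula and the fact, recalled earlier, that $f^\circ$ coincides with $f^*$ on $\R$-Cartier b-$\R$-divisors. For (2)~$\Rightarrow$~(1): BP stability of $\D\sdiv$ over $Z$ says $\K_Z+\D\sdiv=\overline{K_Z+D\sdiv}$ is $\R$-Cartier stable over $Z$; pulling back via $f^*=f^\circ$ then produces an $\R$-Cartier b-divisor stable over $X$, and by Proposition~\ref{bp_equiv_Car} the moduli part $\sM$ is also $\R$-Cartier stable over $X$, so the log adjunction formula exhibits $\K+\D$ as the sum of two $\R$-Cartier pieces stable over $X$ with the $f^\circ$-decomposition characteristic of a log stable pair. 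For (1)~$\Rightarrow$~(2): log stability provides the stability of $\K+\D$ over $X$ in a form compatible with $f$, and combining with the $(Z,D\sdiv)$ log pair assumption and the log adjunction formula one reads off, using Proposition~\ref{bp_equiv_Car}, that $\D\sdiv=\B(Z,D\sdiv)$ and hence BP holds stable over $Z$.

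The main obstacle I anticipate is the careful bookkeeping of the two distinct flavors of stability, one for $\R$-Cartier b-divisors and one for b-divisors satisfying BP, as they appear simultaneously on the two sides of the log adjunction formula. In particular, in (3)~$\Rightarrow$~(2) the equidimensionality of $f$ is essential, as Examples~\ref{exs_bir_finite}(5-6) already show that BP for $\D\sdiv$ can fail even in settings where naive $\R$-Cartier properties of the moduli part might appear to hold on a specific model. Once the stability bookkeeping is done cleanly and Proposition~\ref{bp_equiv_Car} is invoked at the correct models, the chain of equivalences closes itself.
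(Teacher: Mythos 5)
Your treatment of (2)$\Leftrightarrow$(3) is fine and coincides with the paper's: both directions are exactly Proposition~\ref{bp_equiv_Car}, with equidimensionality needed only for (3)$\Rightarrow$(2).

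The equivalence (1)$\Leftrightarrow$(2) is where your proposal has a genuine gap. Log stability is, by definition, a statement about \emph{singularities}: for every $\R$-Cartier $\Delta$ on $Z$, the pair $(X,D+f^*\Delta)$ is lc if and only if $(Z,D\sdiv+\Delta)$ is lc. Your argument never engages with this; instead you try to deduce (1) from (2) by observing that $\K+\D$ decomposes as a sum of two $\R$-Cartier pieces stable over $X$, calling this ``the $f^\circ$-decomposition characteristic of a log stable pair.'' There is no such characterization --- the existence of an $\R$-Cartier decomposition says nothing about whether multiplicities stay $\le 1$ after twisting by $f^*\Delta$. The actual mechanism, which your plan is missing, is the combination of semiadditivity (General property~(2)), which shows that twisting by $f^*\Delta$ shifts $\D\sdiv$ by $\overline{\Delta}$, with General property~(3), which converts the lc condition over the generic point of a prime $b$-divisor $W$ into the inequality $d_W\le 1$. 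With BP stable over $Z$ one has $d_W=\mult_W\B(Z,D\sdiv)$ for all exceptional $W$, and only then do the two lc conditions in the definition of log stability match term by term; this is how the paper proves (2)$\Rightarrow$(1), together with the observation that a prime $b$-divisor of $X$ with log multiplicity $>1$ dominates a prime divisor on some blowup of $Z$.

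The converse (1)$\Rightarrow$(2) is the hard direction and your plan gives no mechanism for it at all. One must show that for \emph{every} exceptional prime divisor $W$ over $Z$ the multiplicity $d_W$ of $\D\sdiv$ equals the codiscrepancy $b_W$ of $(Z,D\sdiv)$, and log stability has to be \emph{used} to probe these multiplicities: the paper passes to a toroidalization as in Theorem~\ref{torific}, chooses an auxiliary $\R$-Cartier $F$ on $Z$ making $\mult_W(\B(Z,D\sdiv)_{Z'}+F')=1$ while keeping $(Z,D\sdiv+F)$ lc, invokes log stability to conclude $(X,D+f^*F)$ is lc and hence $d_W\le b_W$ by General property~(3), and then rules out strict inequality by a separate degeneration argument (if $d_W<1$ then $(X,D)$ would be klt over the center, contradicting log stability after increasing the singularity there). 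Nothing in your $\R$-Cartier bookkeeping produces these inequalities; indeed, ``$\K_Z+\D\sdiv$ is $\R$-Cartier stable over $Z$'' is a statement you are trying to \emph{prove}, not one you can read off from the lc hypothesis without comparing $d_W$ to $b_W$ divisor by divisor.
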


The {\em log stable\/} property in (1) means that,
for every $\R$-Cartier divisor $\Delta$ on $Z$,
$(X,D+f^*\Delta)$ is lc if and only if
$(Z,D\sdiv+\Delta)$ is lc.
The global version of this property is equivalent
to the local one over a neighborhood of every point of $Z$.
Notice also that, by the log adjunction and the negativity lemma,
the b-nef over $Z$ property of $\sD\umod$
(cf. Conjecture~\ref{b-semiample} below)
implies that it is enough to assume the if part
of the log stability.

\begin{proof}
(1)$\Rightarrow$(2):
We need to verify that, for every prime exceptional divisor $W$ on $Z$,
$d_W=\mult_W\D\sdiv=b_W=\mult_W\B(Z,D\sdiv)$ holds.
Use a toroidalization $X'\to Z'$ as in the proof
of Theorem~\ref{torific} after a reduction to a contraction $X'\to Z'$.
Suppose additionally that the invariant part $\Delta\subset Z'$
contains $W$ and $g\1\Supp D\sdiv$ with all exceptional
divisors of a birational projective base change $g\colon Z'\to Z$.
Let $D_{Z'}=\B(Z,D\sdiv)_{Z'}$ be a codiscrepancy.
General property~ (2) allows to replace
$D'$ by $D'+f'^*F'$, where $F'$ is such an $\R$-divisor on $Z'$
supported on $\Delta$ that $\mult_W (D_{Z'}+F')=1$ and
$(Z',D_{Z'}+F')$ is lc.
After an extension of $\Delta$ we can suppose that
$F'\sim_\R 0/Z$ and $F'=g^*F$ for an $\R$-Cartier
divisor $F$ on $Z$.
The pair $(Z,D\sdiv+F)$ is also lc.
Thus by the log stability~(1) and construction
$(X,D+f^*F)$ and its crepant pair $(X',D'+f'^*F')$ are lc.
Now make a change $D:=D+f^*F$ and $F:=F':=0$.
Then $d_W\le b_W=1$
by General property~(3).
(In general $d_W>b_W$ is possible.)
Moreover, $d_W=b_W=1$.
Indeed, in our construction we can assume that
that $\B(Z,D\sdiv)$ has only one prime b-divisor $W$
over its center in $Z$ with $b_W=1$.
Thus if $d_W<1$ then $(X,D)$ is klt over the center.
Increasing singularity in the center we get
a contradicion with log stability.
In other words, the lc property can be replaced by
the relative klt property in the definition of the log stability.

(1)$\Leftarrow$(2) By General property~(2)
it is sufficient to verify the implication assuming BP
stabile over $Z$, that is,
$(X,D)$ is lc exacly when $(Z,D\sdiv)$ is lc.
If $(X,D)$ is lc then the b-divisor $\D\sdiv$ is lc,
that is, it is a subboundary, by General property~(3).
Hence $(Z,D\sdiv)$ is lc by BP stable over $Z$.
Conversely, if $(X,D)$ is not lc, then $\B(X,D)$
is not lc, that is, it is not a subboundary.
Hence the b-divisor $\D\sdiv$ also is not lc by General property~(3)
and because a prime b-divisor $W$ on $X$ with the multiplicity
$\mult_W\B(X,D)>1$ dominates a prime divisor on a blowup
of the base $Z$.
Thus $(Z,D\sdiv)$ is not lc by BP stable over $Z$.

(2)$\Leftrightarrow$(3) by Proposition~\ref{bp_equiv_Car}.
\end{proof}

Let $(X/Z,D)$ be a log pair the generic fiber of which
is a weakly lc pair.
Recall that {\em weakly lc\/} means in this situation that $(X_\eta,D_\eta)$ is lc
with a boundary $D_\eta$ and $K_{X_\eta}+D_\eta$ is nef
where $\eta$ is the general point of $Z$.
Then in the class of weakly lc pairs birationally equivalent
to $(X/Z,D)$ with respect to the base $Z$,
that is, isomorphic or crepant generically over $Z$,
the upper moduli part of adjunction has a largest value $\sD\umm$.
It is largest modulo $\R$-linear equivalence on $X$,
attained on log pairs naturally
related to moduli spaces of the generic fiber,
and will be constructed in Proposition-Definition~\ref{maximal_log_pair} below
modulo LMMP.
This allows to define a birationally invariant with respect to
the base $Z$ moduli part  of adjunction $\sD\umm=\sD\umm(D)=\sD\umm(X/Z,D)$
which satisfies certain remarkable properties
(e.g., Conjecture~\ref{b-semiample}).
However, as usually in mathematics we prefer the adjective
{\em maximal\/} instead of {\em largest\/}.

\begin{propdf}[Maximal log pair] \label{maximal_log_pair}
Let $(X/Z,D)$ be a weakly lc pair with a boundary $D$
over the generic point of $Z$.
Suppose that LMMP holds indimensions $\le \dim X$.
Then there exists a maximal log pair $(X_m/Z_m,B_m)$, a weakly lc pair
birationally equivalent to $(X/Z,D)$ with respect to the base $Z$.
The {\em maximal\/} property means the inequality
$\sB_m\umod\ge\sB'\umod$ modulo linear equivalence
for every weakly lc pair $(X'/Z',B')$,
birationally equivalent to $(X/Z,D)$ with respect to $Z$.

Moreover, $\sD\umm=\sB_m\umod$ is
a b-nef $\R$-Cartier b-$\R$-divisor.
\end{propdf}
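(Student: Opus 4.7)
The plan is to construct $(X_m/Z_m, B_m)$ as a relative log minimal model of a toroidalization of $(X/Z,D)$, and to verify the maximal property by a common-resolution argument based on the negativity lemma.

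First, I reduce to a tractable starting model. Applying Theorem~\ref{torific} (together with Stein factorization) I may replace $(X/Z,D)$ by a pair $(X'/Z',D')$, crepant over the generic point of $Z$, with $X'\to Z'$ a toroidal contraction, $D'$ supported in the invariant divisor, and $\D\sdiv'$ satisfying BP stably over $Z'$. By Proposition~\ref{bp_equiv_Car}, the moduli b-divisor $\sD'\umod$ is then $\R$-Cartier and stable over $X'$. Next, I run the relative log MMP for $(X',D')$ over $Z'$, available by the LMMP hypothesis in dimensions $\le \dim X$. Because $(X_\eta,D_\eta)$ is weakly lc with $K_{X_\eta}+D_\eta$ nef, the MMP modifies only vertical divisors and terminates with a weakly lc pair $(X_m/Z_m,B_m)$ with $K_{X_m}+B_m$ nef over $Z_m$. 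A further toroidalization of $Z_m$ (and the induced base change on $X_m$) ensures that BP for the divisorial part of $(X_m,B_m)$ is stable over $Z_m$, so that the log adjunction $\K+\B_m = f_m^\circ(\K_{Z_m}+\B_m\sdiv)+\sB_m\umod$ holds with all terms $\R$-Cartier and stable.

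For maximality, let $(X''/Z'',B'')$ be another weakly lc pair birationally equivalent to $(X/Z,D)$ over $Z$. Choose a common log resolution $Y$ dominating both $X_m$ and $X''$, sitting over a common base $W$ dominating both $Z_m$ and $Z''$, and further resolved so that BP is stable. Because the two pairs are crepant over the generic point of $Z$, the traces of the codiscrepancy b-divisors $\B(X_m,B_m)$ and $\B(X'',B'')$ on $Y$ coincide horizontally, differing only in divisors vertical over $W$. Relative log-minimality of $(X_m,B_m)$ over $Z_m$ combined with the negativity lemma forces the vertical discrepancy $\B(X'',B'')_Y - \B(X_m,B_m)_Y$ to be effective, which translates --- via the definition of the divisorial part and the log adjunction $\sM = \K+\D - f^\circ(\K_Z+\D\sdiv)$ --- into the inequality $\sB_m\umod \ge \sB''\umod$ on $Y$, and hence modulo $\R$-linear equivalence as b-divisors on $X$. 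The b-nef and $\R$-Cartier conclusion for $\sD\umm = \sB_m\umod$ then follows from Proposition~\ref{bp_equiv_Car} together with the canonical bundle formula: the minimality $K_{X_m}+B_m$ nef over $Z_m$ forces, via Ambro--Fujino positivity, the descent $\sB_m\smod$ to be a b-nef $\R$-Cartier class on $Z_m$, and its pullback $\sB_m\umod = f_m^\circ \sB_m\smod$ (modulo the fixed part) is likewise b-nef and $\R$-Cartier.

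The main obstacle is the maximality comparison. One must carefully separate horizontal from vertical components on $Y$, choose compatible canonical b-divisors on $X_m$, $X''$, and $Y$ so that the ``modulo $\R$-linear equivalence'' bookkeeping goes through, and apply the negativity lemma in the correct direction (relative minimality supplying the effectiveness of the crepant discrepancy against any competing model). A further delicate point is the descent of $\sB_m\umod$ to a b-nef class on $Z_m$: it needs the full $\R$-linear equivalence $K_{X_m}+B_m \sim_\R f_m^*\sL$ for some $\R$-Cartier $\sL$, which in turn rests on the relative basepoint-free theorem applied to the LMMP output.
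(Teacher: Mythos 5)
Your construction step is broadly in the spirit of the paper (toroidalization via \cite[Theorem~2.1]{AK} plus LMMP, with the vertical invariant multiplicities pushed up to $1$), but the two load-bearing parts of your argument have genuine gaps. First, the b-nef property. You propose to get it by descending the moduli part to the base ``via Ambro--Fujino positivity,'' i.e.\ via $\sB_m\smod$. But the descent $\sM^Z=\sL-\K_Z-\D\sdiv$ exists only when $\K+\D\sim_\R f^*\sL$ for an $\R$-Cartier b-$\R$-divisor $\sL$, i.e.\ essentially for a relative $0$-pair; a weakly lc pair only has $K_{X_\eta}+D_\eta$ nef on the generic fiber (think of a family of general-type fibers), so no such $\sL$ exists and no relative basepoint-free theorem will produce one. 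This is exactly why the paper does \emph{not} argue this way: it imports the nefness of $M_X=\sM\umm_X$ on $X$ from the foliation-theoretic result \cite[Theorem~1.1]{ACShS} for contractions, and then uses that nefness in dimension $\le\dim X-1$ (via adjunction to invariant divisors) to prove BP-stability of $\B_{m,}\sdiv$, whence b-nefness by stability and Proposition~\ref{bp_equiv_Car}. Your ``further toroidalization of $Z_m$ ensures BP is stable'' also cannot be taken for granted --- Examples~\ref{exs_bir_finite},~(5--6) are precisely failures of BP, and running an MMP destroys toroidality --- so the stability step needs the argument the paper gives, not an assertion.

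Second, the maximality comparison. You compare the codiscrepancy traces $\B(X'',B'')_Y$ and $\B(X_m,B_m)_Y$ on a common resolution and invoke the negativity lemma. But the two competing pairs have \emph{different} divisorial parts of adjunction, so an inequality between $\K+\D$'s does not translate into an inequality between moduli parts: the terms $f^\circ(\K_Z+\D\sdiv)$ being subtracted are different. (Moreover, effectivity of $\B(X'',B'')_Y-\B(X_m,B_m)_Y$ does not follow from relative minimality of $(X_m,B_m)$ --- the vertical multiplicities of $B''$ are arbitrary boundary multiplicities --- and, if it held, it would point the wrong way for your conclusion.) The paper's route is: first use semiadditivity (General property~(2)) to add $f^*\Delta$ and normalize so that $B'\sdiv=B_{m,}\sdiv$, after which the comparison of moduli parts \emph{does} reduce to comparing $\overline{K+B'}$ with $\overline{K+B_m}$, settled by monotonicity of the positive part of the Zariski decomposition; then handle a non-contraction $X/Z$ through the Stein factorization $X\to Y\to Z$ using Corollary~\ref{addit_mod_adj}, $\sM_{X/Z}=\sM_{X/Y}+f^\circ\sM_{Y/Z}$, together with the explicit computation $\sM_{Y/Z}=\B'\sdiv-\overline{D}\le 0$ for the finite toroidal part. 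Your proposal is missing both the normalization of divisorial parts and the additivity step, and these are what make the inequality $\sB_m\umod\ge\sB''\umod$ actually come out.
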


For appropriate canonical divisors on $X$ and $Z$,
$\ge$ holds literally without the linear equivalence.

\begin{ex} \label{max_mod_0_pair}
Let $(X/Z,D)$ be a pair under $\GLC$ and
generically be a $0$-pair over $Z$.
Then it is a maximal log pair exactly when $(X/Z,D)$
is a $0$-pair everywhere over $Z$ with
a boundary $D$.
The last condition can be omitted
keeping the same maximal moduli part of adjunction
(cf. \cite[Proposition~13, (1)]{Sh20}).
In this situation
$\sD\umm=f^*\sD\smod$.
Thus $\sD\umm$ is a generalization of $f^*\sD\smod$
with consequent conjectures (cf. Conjecture~\ref{b-semiample} below).

\end{ex}

Moduli construction of $(X_m/Z_m,B_m)$ and a proof of
the proposition-definition actually need a weaker assumption on LMMP:
in $\dim X/Z+1$, and  will be explained in \cite{Sh13}.
The moduli approach does not use also the b-nef property of $\sD\umm$
(cf. Conjecture~\ref{b-semiample} below).

\begin{proof}[Proof of Proposition-Definition~\ref{maximal_log_pair}]
The nef property of $M_X=\sM\umm_X$ was established in \cite[Theorem~1.1]{ACShS}
using the theory of foliations for contractions $X/Z$.
The b-nef property of $\sM\umm$ follows from stability of $\sM\umm$ below.
Thus, for contractions $X/Z$, we can suppose the b-nef property.
Actually, we need the b-nef property in $\dim X-1$.
An alternative approach to the b-nef property will be discussed after
Conjecture~\ref{b-semiample} below.

Fix the birational class of $(X/Z,D)$, that is,
the class of pair $(X'/Z',B')$ which are birationally equivalent to
$(X/Z,D)$ with respect to the base $Z$.
More precisely, we consider the subclass of pairs $(X'/Z',B')$
which are weakly lc with a boundary $B'$ and
$(X'/Z',B')$ is crepant to $(X/Z,D)$ generically over $Z$ or $Z'$.

{\em Construction of $(X_m/Z_m,B_m)$.\/}
Suppose first that $X/Z$ is a contraction.
By \cite[Theorem~2.1]{AK} we can suppose that $(X/Z,B)$ is
toroidal with a nonsingular projective base $Z$ and
with a boundary $B$ such that
\begin{description}

  \item[]
$\Supp B$ is in the invariant divisor;

  \item[]
with the same multiplicities as $D$ generically over $Z$,
that is, in all those prime divisors nonexceptional on original $X$;
and

  \item[]
with multiplicities $1$ in all other invariant prime divisors.

\end{description}
By our assumptions we can construct a weakly lc pair $(X_m/Z_m,B_m)$
over $Z=Z_m$.
This is a required maximal model.
By construction $(X_m/Z_m,B_m)$ belongs to the considered class
weakly lc models.
Moreover, $\B_{m,}\sdiv{}_{,Z_m}=\Delta_m$ is the invariant divisor on $Z_m$.

In general, we do not know the existence of toroidilization (why not).
So, we use the following construction.
Take a Stein factorization
$$
X\stackrel{f}{\to} Y\stackrel{g}{\to} Z
$$
of $X/Z$ where $f,g$ are respectively a contraction and a finite morphism.
For an appropriate model of $X/Z$, we can suppose that $Z$ is nonsingular,
$g$ is toroidal and there is a morphism $\varphi\colon Y\to Y_m$ for
a maximal log pair $(X_m/Y_m,B_m)$ constructed above such that
the divisorial part $(\B_{m,}\sdiv)_Y$ is supported in the invariant
divisor $D$ of the toroidal finite morphism $g$.
We suppose also that $(X/Y,B)$ is isomorphic to $(X_m/Y_m,B_m)$ over $Y\setminus D$
isomorphic to $Y_m\setminus\varphi(D)$.
The crepant model $(X/Y,(\B_m)_X)$ of $(X_m/Y_m,B_m)$ is a weakly lc pair if
the trace $(\B_m)_X$ is effective.
Otherwise, we replace $(\B_m)_X$ by $(\B_m)_X+f^*E$, where
$E=D-(\B_{m,}\sdiv)_Y$.
By construction $E$ is effective, $(\B_{m,}\sdiv)_Y+E$ is the invariant divisor $D$.
By General property (2),
$D$ is the divisorial part of adjunction for $(X/Y,(\B_m)_X+f^*E)$.
By construction, the moduli part of adjunction for $(X/Y,(\B_m)_X+f^*E)$
is the same $\sD\umm$ and maximal that will be established below
(already known for contractions).
Then we can reconstruct $(X/Y,(\B_m)_X+f^*E)$ into
a weakly lc pair using LMMP over $Y$: take log resolution and
replace multiplicities of all exceptional divisors over $D$ by $1$ and
by $0$ otherwise, and apply LMMP over $Y$.
The maximal property below for contractions warrants that the constructed model is
crepant to $(X/Y,(\B_m)_X+f^*E)$ and is maximal over $Y$.

{\em Stability of $\sD\umm$.\/}
Since in the last construction the divisorial part of adjunction
for $(X/Y,(\B_m)_X+f^*E)$ is an integral invariant divisor $D$ then
the stability for $X/Z$ is equivalent to the stability for $X/Y$ by
Theorem~\ref{tras_div_adj} and Proposition~\ref{bp_equiv_Car}.
So, we can suppose that $X/Z$ is a contraction.
By definition $\sD\umm=\sB_m\umod$.
So, by Proposition~\ref{bp_equiv_Car}, it is enough to verify that
$\B_{m,}\sdiv$ satisfies BP stable over $Z_m$.
Using General properties, dimensional induction and
induction on the number of monoidal transformations
we can consider only one such transformation in the following situation.
Let $P$ be an invariant prime cycle and
$Z\to Z_m$ be a monoidal transformation in $P$.
The transformation is toroidal and the invariant
divisor $\Delta$ on $Z$ is the birational transform of $\Delta_m$
plus the exceptional divisor $E$ (over $P$) of the transformation.
To establish required stability it is enough to verify that
$\mult_E\B_{m,}\sdiv=1$, that is, $\Delta=\B_{m,}\sdiv{}_{,Z}$.
Taking hyperplane sections we can suppose that $P$ is
a closed point.
We can suppose also that $\Delta_m$ is sufficiently large and
$(X_m,B_m)$ itself (not only over $Z_m$) is a weakly lc model.

To compute the last divisorial part of adjunction we can take
a toroidal resolution of $X_m$ over $Z$ and
then construct a model $(X/Z,B)$ as in above Construction
using LMMP.
By construction $\Delta=\B\sdiv{}_{,Z}$.
Thus to verify that $\B\sdiv{}_{,m}=\B_{m,}\sdiv{}_{,Z}$ it
is enough (and actually necessary) to verify that
$(X,B)$ is a crepant model of $(X_m,B_m)$.
(As usually we consider such models over $\pt$)
Indeed, $(Z,\Delta)\to (Z_m,\Delta_m)$ is crepant.

On the other hand, the weakly lc pair $(X_m,B_m)$ can be constructed
from $(X,B)$ by LMMP and a crepant transformation.
If $\Delta$ is sufficiently large the only possible
curves negative with respect to $K+B$ are curves $C$ over $E$.
These curves are on reduced prime divisors of invariant part $V$.
Moreover, by above Construction and adjunction
$(C.K+B)=(C.K_V+B_V)=(C.\sB_V\umm)$ and $\ge 0$
by the b-nef property of $\sB_V\umm$ for $(V/E,B_V)$
by \cite[Theorem~1.1]{ACShS} and we need this property in dimension $\le \dim X-1$.
Hence there are no negative curves and $(X,B)$ is crepant to
$(X_m,B_m)$.

{\em The maximal property of $\sD\umm$.\/}
In particular, $\sD\umm$ modulo linear equivalence is
independent of construction.
Moreover, $\sD\umm$ is unique for fixed canonical divisors on $X$ and $Z$.
This can be verified directly using General properties.
Notice for this especially General property~(2) which
implies that $\sD'\umm=\sD\umm$ for $D'=D+f^*\Delta$.

If $X\to Z$ is a contraction, adding effective divisors we can
suppose that $B'\sdiv=B_{m,}\sdiv=\B_{m,}\sdiv{}_{,Z'}$.
If additionally $\B_{m,}'\sdiv$ is BP stable over $Z'$
then the required inequality follows from the property
that, for a larger divisor, its positive part in the Zariski
decomposition is larger too.
Perhaps, the same holds even if $\B\sdiv'$ does not satisfies BP.

In general, we use the above construction of $(X/Y/Z,B_m)$
with $B_m:=(\B_m)_X+f^*E)$,
where ${\B_{m,}}\sdiv=\overline{D}$ is BP stable over $Z$.
The maximal moduli part $\sM\umm$ over $Z$ is the same as over $Y$:
$$
\sD\umm=\K+\B_m-(f\circ g)^\circ(\K_Z+\overline{D_Z})=
\K+\B_m-f^\circ(g^\circ(\K_Z+\overline{D_Z}))=\K+\B_m-f^\circ(\K_Y+\overline{D}),
$$
where $D_Z$ is the invariant divisor for the toroidal morphism $g$.
Notice also that any weakly lc model over $Z'$ is a weakly lc model over $Y'/Z'$
with a birational morphism $Y\to Y'$ for sufficiently high model $Y$ because
any curve over $Y'$ is a curve over $Z'$.

We do not need to suppose that the divisorial part of adjunction $\B'\sdiv$ of $(X'/Y',B')$
is BP stable over $Y$.
The required maximal property follows from Corollary~\ref{addit_mod_adj}.
Indeed, the corollary
$$
\sB'_{X'/Z'}=\sB'_{X'/Y'}+\sM_{Y/Z},
$$
where $\sB'_{X'/Z'},\sB'_{X'/Y'},\sM_{Y,Z}$
denotes respectively the moduli part of adjunction for
$(X'/Z',B'),(X'/Y',B'),(Y/Z,\B'\sdiv)$.
However, in nonstable situation we consider a moduli part and
divisorial part of adjunction as b-divisors.
Since $\sB'_{X'/Y'}\le \sD\umm$, it is enough to verify that
$\sM_{Y/Z}\le 0$ for appropriate canonical b-divisors of $Y,Z$
(cf. Example~\ref{exs_bir_finite}, (2)).

By definition, construction and the lc property of $\B'\sdiv$ we can suppose that
$\B'\sdiv\le\overline{D}$ and $=\overline{0}$ over $Y\setminus D$
(cf. Theorem~\ref{torific}).
By General property (2) after increasing we can suppose that
the divisorial part of adjunction $\D'$ for $(Y/Z,\B'\sdiv)$ is
$\overline{D_Z}$.
Since $Y/Z$ is toroidal
$$
\sM_{Y/Z}=\K_Y+\B'\sdiv-g^\circ(\K_Z+\overline{D_Z})=
\K_Y+\B'\sdiv-\K_Y-\overline{D}=
\B'\sdiv-\overline{D}\le 0
$$
holds.
This concludes the proof of maximal property.

In other words, $\sD\umm$ is the positive b-divisor of
a relative lc b-divisor.

\end{proof}

\begin{conj} \label{b-semiample}
For every positive $a\in \R$,
$\sD\umm+a\sP$ is b-semiample and
effectively b-semiample if the moduli type of
irreducible components of generic fiber is bounded where
$\sP$ is a divisor of canonical polarization for
moduli of irreducible components of generic fiber.
Moreover, the Iitaka dimension of $\sD\umm$ is
at least the Kodaira dimension of the general fiber of $(X/Z,D)$
plus the variation of $(X/Z,D)$.
\end{conj}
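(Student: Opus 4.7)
The plan is to reduce, via Stein factorization and the construction of Proposition-Definition~\ref{maximal_log_pair}, to the case where $X/Z$ is a contraction, and then to interpret $\sD\umm$ as (essentially) the pullback of a canonical polarization from a moduli space parametrising the generic fibers. Concretely, I would fix a maximal model $(X_m/Z_m,B_m)$ so that $\sD\umm=\sB_m\umod$ is b-nef and $\R$-Cartier and $\B_{m,}\sdiv$ is BP stable over $Z_m$. Under the boundedness hypothesis of the conjecture, the family $(X_m/Z_m,B_m)$ fits a bounded moduli functor coarsely represented by a projective scheme $M$ carrying a canonical polarization $\sP_M$ of bounded index; the resulting classifying map $\varphi\colon Z_m\to M$, a priori only rational, extends to a morphism after passing to a sufficiently high birational model of $Z_m$.

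The core claim to prove is the moduli identification: up to b-$\R$-linear equivalence,
\[
\sD\umm\sim_\R f^\circ\varphi^*\sP_M + (\text{a contribution supported on }\B_{m,}\sdiv),
\]
which follows by comparing the universal family over $M$ with $X_m/Z_m$ and using the uniqueness modulo linear equivalence granted by Proposition-Definition~\ref{maximal_log_pair}. Given this identification, b-semiampleness of $\sD\umm+a\sP$ for every $a>0$ reduces to semiampleness of $\varphi^*\sP_M+a\sP$, where the positive multiple of $\sP$ absorbs any failure of strict positivity on partial compactifications of the moduli space, which is crucial when the moduli type is unbounded. Effective b-semiampleness under boundedness then further reduces to an effective Fujita-type statement for $\sP_M$ on the projective $M$, available once the index of $\sP_M$ is uniformly bounded. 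For the Iitaka-dimension bound, I would combine $(\sD\umm)|_{X_{m,\eta}}\sim_\R K_{X_{m,\eta}}+B_{m,\eta}$, so that the restriction to a general fiber captures the Kodaira dimension of the generic fiber of $(X/Z,D)$, with the identification $\mathrm{Var}(X/Z,D)=\dim\varphi(Z_m)$, and apply Iitaka's easy-addition inequality to a model of $X_m$ fibered over $\varphi(Z_m)$.

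The main obstacle is moduli-theoretic: constructing a projective coarse moduli space with a canonical polarization of bounded index for families of weakly lc pairs with boundary, in particular with irrational multiplicities as in Example~\ref{exs_bir_finite}~(5--6). For klt canonically polarized varieties this goes back to Viehweg; substantial extensions to stable log pairs are due to Koll\'ar and collaborators, but the fully general log-canonical situation considered here remains open and is essentially equivalent in strength to the conjecture itself. A secondary obstacle is effectivity, which requires a uniform index bound on $\sP_M$, linked in spirit to the Index conjecture invoked earlier in the paper.
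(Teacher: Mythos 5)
The statement you are trying to prove is Conjecture~\ref{b-semiample}: the paper offers no proof of it, only a geometric interpretation (a rational map to the coarse moduli of components of fibers, with $\sP$ ample on the moduli and $\sD\umm$ ample on the fibers of the induced family of log canonical models) and the remark, via \cite[Section~3]{K99}, that semiampleness of $\sD\umm$ itself fails in general. Your proposal follows exactly that intended picture, but it is not a proof: the ``core claim'' $\sD\umm\sim_\R f^\circ\varphi^*\sP_M+(\text{contribution supported on }\B_{m,}\sdiv)$ is not derived from anything --- the uniqueness modulo linear equivalence in Proposition-Definition~\ref{maximal_log_pair} compares $\sD\umm$ with moduli parts of other weakly lc models of the \emph{same} pair, not with a polarization pulled back from a moduli space, so it cannot produce this identification. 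As you yourself concede, the existence of a projective coarse moduli space with a canonically polarized structure for the relevant weakly lc pairs (with possibly irrational boundary multiplicities, as in Example~\ref{exs_bir_finite}, (5--6)) is open and is essentially the content of the conjecture. So the reduction is circular at its key step.

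There is also a concrete error in the last part: Iitaka's easy addition gives the inequality $\kappa(W,L)\le\kappa(W_t,L\rest{W_t})+\dim T$ for a fibration $W\to T$, i.e.\ an \emph{upper} bound, whereas the conjecture asserts a \emph{lower} bound on the Iitaka dimension of $\sD\umm$ by the fiberwise Kodaira dimension plus the variation. That direction is the hard one (Viehweg-type additivity) and requires positivity of direct images, which is precisely what the paper points to (the Viehweg positivity and \cite[Thorem~0.1]{Am03}) as the expected, but unproved, input. Finally, note that the restriction $(\sD\umm)\rest{X_{m,\eta}}\sim_\R K_{X_{m,\eta}}+B_{m,\eta}$ computes the log Kodaira dimension of the fiber pair, while the conjecture speaks of the Kodaira dimension of the general fiber of $(X/Z,D)$; these agree here only because $D$ is assumed a boundary on the generic fiber and the maximal model is weakly lc there, a point your sketch should make explicit rather than assume.
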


The effective b-semiample property of $\sD\umm+a\sP$ is
meaningful even if $D$ and $a$ are not rational.
This can be explained in terms of geography of log models
(see Conjecture~\ref{loggeog} below and
\cite[Bounded affine span and index of divisor
in Section~12]{Sh20}).

A geometrical interpretation of the conjecture:
the corresponding contraction
gives a family of log canonical models of components of
fibers.
So, in addition to a rational morphism from $Y$ in
the Stein factorization $X\to Y\to Z$ to the
coarse moduli of irreducible components of fibers,
we have a rational morphism of $X$ to the family with log canonical models
in fibers whereas $\sP$ is ample on the base, $\sD\umm$ is
nef on the family and ample on its fibers.
In particular, the coarse moduli can be extended to
a fibration with corresponding log canonical models.

Notice that the conjecture implies the Kodaira additivity
in the strong form (with variation).
The semiampleness of $\sD\umm$ does not hold in general
according to \cite[Section~3]{K99}.

The conjecture implies also that $\sD\umm$ is b-nef.
This property naturally related to
the Viehweg positivity for the direct image of a relative dualizing sheaf and
the polarization $\sP$.
We already used the b-nef property in the proof of Proposition-Definition~\ref{maximal_log_pair}.
The author thinks that this property follows
from its special klt case with a $1$-dimensional base
by \cite[Thorem~0.1]{Am03} and adjunction.

Finally, we expect the following strong form of stability of $\sD\umm$
with respect to horizontal part of $D$ which is supposed to be
a boundary.

\begin{conj}[Geography of log adjunction] \label{loggeog}
Let $X/Z$ be a proper morphism, $\eta$ be the general point of
$Z$ and $S_i$ be distinct prime b-divisors of $X$.
Consider a polyhedron $\mathfrak P$ in the wlc geography of $\mathfrak N_{S_\eta}$
of $(X_\eta/\eta,\sum S_{i,\eta})$ \cite[Section~3]{ShCh}.
Then $\sD\umm(D)$ is linear on $\overline{\mathfrak P}$:
for every two (b-)divisors $D_1,D_2$ such that
$D_{1,\eta},D_{2,\eta}\in\overline{\mathfrak P}$ and any two
real numbers $w_1,w_2\in[0,1]$ with $w_1+w_2=1$,
$$
\sD\umm(w_1D_1+w_2D_2)=w_1\sD\umm(D_1)+w_2\sD\umm(D_2).
$$
In other words $\sD\umm$ is a piecewise linear function
of the horizontal part $D_\eta$ of $D$.
\end{conj}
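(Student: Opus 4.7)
The plan is to reduce the statement to two facts about the maximal construction of Proposition-Definition~\ref{maximal_log_pair}: that on the closure $\overline{\mathfrak P}$ of a polyhedron of wlc geography the underlying model of the maximal log pair is common, and that the boundary and its divisorial part then depend affinely on the weights $w_1,w_2$. First I would invoke a relative analogue of the wlc geography of $\mathfrak N_{S_\eta}$ \cite[Section~3]{ShCh}, extended over the base $Z$: propagating the polyhedral decomposition from the generic fibre $X_\eta$ to the whole family, every horizontal boundary $D$ with $D_\eta\in\overline{\mathfrak P}$ should admit a common weakly lc model $(X_m/Z_m,B_m(D))$ as in Proposition-Definition~\ref{maximal_log_pair}, with the morphism $X_m\to Z_m$ independent of $D$. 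On such a common model $B_m(D)$ is the birational transform of $w_1D_1+w_2D_2$ plus fixed vertical invariant components of coefficient $1$, hence depends affinely on $w=(w_1,w_2)$.

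Second, with $X_m/Z_m$ fixed, I would verify that the map $B_m\mapsto\B_m\sdiv$ is affine on the relevant range. Semiadditivity (General property~(2)) already handles vertical $\R$-Cartier shifts pulled back from $Z_m$, while the toroidal reduction used in the proof of Proposition-Definition~\ref{maximal_log_pair} (together with Theorem~\ref{torific}) reduces $\B_m\sdiv$ to the constant invariant divisor $\overline{\Delta_m}$ on $Z_m$ when $X_m/Z_m$ is toroidal with $\Supp B_m$ in the invariant locus. The remaining multiplicities $d_W$ along exceptional prime divisors $W$ of $Z_m$ are computed as log canonical thresholds on a fixed toroidal model, and such thresholds are piecewise linear in the coefficients of $B_m$, hence linear on $\overline{\mathfrak P}$ by construction of the polyhedral decomposition.

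Third, the log-adjunction formula
\[
\sD\umm(D)=\K+\B_m(D)-f^\circ\bigl(\K_{Z_m}+\B_m\sdiv(D)\bigr)
\]
then displays $\sD\umm(w_1D_1+w_2D_2)$ as an affine function of $(w_1,w_2)$ on $\overline{\mathfrak P}$, which is exactly the required equality. Canonical b-divisors $\K$ and $\K_{Z_m}$ can be chosen once and for all on the common model, yielding a literal identity of b-divisors (not merely $\sim_\R$), in accordance with the remark after Proposition-Definition~\ref{maximal_log_pair}.

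The main obstacle is the first step: establishing the relative wlc geography with a common maximal model simultaneously valid for every $D$ with $D_\eta\in\overline{\mathfrak P}$. This combines a polyhedral finiteness of weakly lc contractions over $Z$ with compatibility of the Abramovich--Karu toroidalization used in Proposition-Definition~\ref{maximal_log_pair} under linear variation of $D$, and will presumably require LMMP in dimension $\le\dim X+1$ together with a stability strengthening of the maximal construction. Once the common model is in place, the second and third steps become formal consequences of General properties (1)--(2) and the toroidal computation already employed.
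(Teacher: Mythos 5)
The statement you are proving is stated in the paper as a \emph{conjecture}; the paper offers no proof of it, only the remark that rationality of $\mathfrak P$ reduces Conjecture~\ref{b-semiample} to $\Q$-divisors. So your proposal cannot be measured against a paper argument, and it has to stand on its own --- and it does not yet. Your first step asks for a ``relative wlc geography'' over all of $Z$: a single common maximal model $(X_m/Z_m,B_m(D))$, with $X_m\to Z_m$ independent of $D$, valid simultaneously for every $D$ with $D_\eta\in\overline{\mathfrak P}$. But \cite[Section~3]{ShCh} constructs the polyhedral decomposition only for the generic fibre $(X_\eta/\eta,\sum S_{i,\eta})$; propagating it over the whole base, compatibly with the Abramovich--Karu toroidalization and the LMMP runs in the maximal construction, is essentially the content of the conjecture itself, not an available input. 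You acknowledge this as ``the main obstacle,'' which is honest, but it means the argument is a reduction of the conjecture to an unproved relative finiteness statement, not a proof.

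The second step has a gap of the same nature. The multiplicities $d_W$ of the divisorial part are computed as maxima, respectively lc thresholds, in the coefficients of the boundary (cf.\ Example~\ref{exs_bir_finite},~(2), where $d'=\max\{(r_i+d_i)/m_i\}$, and Examples~(5--6), where the vertical behaviour --- even the validity of BP --- depends on arithmetic relations among the horizontal multiplicities $d_1,d_2$). Such functions are piecewise linear, but the walls across which the linearity breaks are governed by degenerations over special points of $Z$, and there is no a priori reason that they are refined by the decomposition $\overline{\mathfrak P}$ coming from the generic fibre. Asserting that the thresholds ``are linear on $\overline{\mathfrak P}$ by construction of the polyhedral decomposition'' therefore assumes exactly what must be shown. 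The third step (reading off linearity of $\sD\umm$ from the adjunction formula once $B_m(D)$ and $\B_m\sdiv(D)$ are affine in $(w_1,w_2)$ on a fixed model) is fine and is the formal part; the substance of the conjecture lives entirely in the two steps that remain open.
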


Since $\mathfrak P$ is rational it is enough to verify Conjecture~\ref{b-semiample}
for $\Q$-divisors $D$.

\bigskip
\noindent
Department of Mathematics, the Johns Hopkins University,
Baltimore, Maryland 21218, the USA

\noindent
119991 Steklov Mathematical Institute RAS,
8 Gubkina st., 119991, Moscow, Russia

\noindent
e-mail: shokurov@math.jhu.edu

\end{document}